\newtheorem{thm}{Theorem}[section]
\newtheorem{cor}[thm]{Corollary}
\newtheorem{ass}[thm]{Assumption}
\newtheorem{prop}[thm]{Proposition}
\theoremstyle{definition}
\newtheorem{definition}[thm]{Definition}
\theoremstyle{remark}
\newtheorem{rem}[thm]{Remark}
\numberwithin{equation}{section}
\def\be#1 {\begin{equation} \label{#1}}
\newcommand{\ee}{\end{equation}}
\newcommand{\N}{\mathbb N}
\newcommand{\BMO}{\textrm{BMO}}
\let\Re=\relax
\DeclareMathOperator{\Re}{Re}
\def\Xint#1{\mathchoice
   {\XXint\displaystyle\textstyle{#1}}%
   {\XXint\textstyle\scriptstyle{#1}}%
   {\XXint\scriptstyle\scriptscriptstyle{#1}}%
   {\XXint\scriptscriptstyle\scriptscriptstyle{#1}}%
   \!\int}
\def\XXint#1#2#3{{\setbox0=\hbox{$#1{#2#3}{\int}$}
     \vcenter{\hbox{$#2#3$}}\kern-.5\wd0}}
\def\aver#1{\Xint-_{#1}}
\begin{document}

\begin{abstract} 
This paper can be considered as a sequel of \cite{BS} by Bernicot and Samoyeau, where the authors have proposed a general way of deriving Strichartz estimates for the Schr\"odinger equation from a dispersive property of the wave propagator.
It goes through a reduction of $H^1-\BMO$ dispersive estimates for the Schr\"odinger propagator to $L^2-L^2$ microlocalized (in space and in frequency) dispersion inequalities for the wave operator. 
This paper aims to contribute in enlightening our comprehension of how dispersion for waves imply dispersion for the Schr\"odinger equation. More precisely, the hypothesis of our main theorem encodes dispersion for the wave equation in an uniform way, with respect to the light cone.
In many situations the phenomena that arise near the boundary of the light cone are the more complicated ones. The method we present allows to forget those phenomena we do not understand very well yet. 
The second main step shows the Strichartz estimates with loss of derivatives we can obtain under those assumptions.
The setting we work with is general enough to recover a large variety of frameworks (infinite metric spaces, Riemannian manifolds with rough metric, some groups, ...) where the lack of knowledge of the wave propagator is a restraint to our understanding of the dispersion phenomenon.
\end{abstract}

\keywords{dispersive inequalities; Strichartz inequalities; heat semigroup; Schr\"odinger group; wave operator}

\subjclass[2010]{Primary; 35B30; 42B37; 47D03; 47D06}

\title[Strichartz estimates under weak dispersion]{Strichartz estimates with loss of derivatives under a weak dispersion property for the wave operator}

\author{Valentin Samoyeau}
\address{Valentin Samoyeau - Universit\'e de Nantes \\ Laboratoire Jean Leray
\\ 2, rue de la Houssini\`ere
44322 Nantes cedex 3, France}
\email{valentin.samoyeau@univ-nantes.fr}
\urladdr{http://www.math.sciences.univ-nantes.fr/~samoyeau/}

\thanks{V. Samoyeau's research is supported by Centre Henri Lebesgue (program
"Investissements d'avenir" --- ANR-11-LABX-0020-01).}
\thanks{This work benefits from and fits into the ERC project FAnFArE no. 637510.}

\numberwithin{equation}{section}

\date{\today}

\maketitle

\begin{quote}
\footnotesize\tableofcontents
\end{quote}

\section{Introduction}\label{section_introduction}


The family of so-called Strichartz estimates is a powerful tool to study nonlinear Schr\"odinger equations. Those estimates give a control of the size of the solution to a linear problem in term of the size of the initial data. 
The ``size'' notion is usually given by a suitable functional space $L^p_t L^q_x$. 
Such inequalities were first introduced by Strichartz in \cite{Strichartz} for Schr\"odinger waves on the 
Euclidean space. They were then extended by Ginibre and Velo in \cite{GV} (and the endpoint is due to 
Keel and Tao in \cite{KT}) for the propagator operator
associated with the linear Schr\"odinger equation in $\mathbb R^d$. So for an initial data $u_0$, we 
are interested in controlling $u(t,\ldotp) = e^{it\Delta}u_0$ which is the solution of the linear Schr\"odinger equation:
$$\begin{cases}
   &i\partial_tu + \Delta u=0 \\
   &u_{|t=0}=u_0.
  \end{cases}$$
It is well-known that the unitary group $e^{it\Delta}$ satisfies the following inequality: $$\|e^{it\Delta}u_0\|_{L^pL^q([-T,T] \times \mathbb R^d)} \leq C_T \|u_0\|_{L^2(\mathbb R^d)}$$
for every pair $(p,q)$ of admissible exponents which means : $2\leq p,q \leq \infty$, $(p,q,d)\neq(2,\infty,2)$, and 
\begin{equation}\label{admissible} 
 \frac 2p + \frac dq = \frac d2.
\end{equation}
The Strichartz estimates can be deduced via a $TT^*$ argument from the dispersive estimates 
\begin{equation}\label{disp_L1_Linfty}
 \|e^{it \Delta}u_0\|_{L^{\infty}( \mathbb R^d)} \lesssim |t|^{-\frac d2} \|u_0\|_{L^1(\mathbb R^d)}.
\end{equation}

If $\sup_{T>0}C_T<+\infty$, we will say that a global-in-time Strichartz estimate holds. Such a global-in-time 
estimate has been proved by Strichartz for the flat Laplacian on $\mathbb R^d$ 
while the local-in-time estimate is known in several geometric situations where the manifold is non-trapping 
(asymptotically Euclidean, conic, or hyperbolic, Heisenberg group); see \cite{BT,Bouclet, HTW,ST,BGX} or 
with variable coefficients \cite{RZ,T}. 

The situation for compact manifolds presents a new difficulty, since considering the
constant initial data on the torus $u_0= 1\in L^2(\mathbb T)$ yields a contradiction in \eqref{disp_L1_Linfty} for large time.


Burq, G\'erard, and Tzvetkov \cite{BGT} and Staffilani and Tataru \cite{ST} proved that Strichartz estimates
hold on a compact manifold $\mathcal M$ for finite time if one considers regular data $u_0\in W^{1/p,2}(\mathcal M)$. Those are 
called ``with a loss of derivatives'': 
$$\|e^{it\Delta}u_0\|_{L^pL^q([-T,T] \times \mathcal M)} \leq C_T \|u_0\|_{W^{1/p,2}(\mathcal M)}.$$
An interesting problem is to determine for specific situations, 
which loss of derivatives is optimal (for example the work of Bourgain \cite{Bourgain} on the flat torus and \cite{TT} of Takaoka and Tzvetkov). 
For instance, the loss of $\frac 1p$ derivatives in \cite{BGT} is shown to be optimal in the case of the sphere.

%

An important remark is that, by Sobolev embedding, the loss of $2/p$ derivatives is straightforward.
Indeed, by Sobolev embedding, we have $W^{\frac 2p,2} \hookrightarrow L^q$ since $\frac 2p - \frac d2 = 0-\frac dq$ so that 
\begin{equation} \|e^{it\Delta}u_0\|_{L^q} \lesssim \|e^{it\Delta}u_0\|_{W^{\frac 2p,2}} \leq \|u_0\|_{W^{\frac 2p,2}} \label{eq:tri} \end{equation}
and taking the $L^p([-T,T])$ norm yields $$\|e^{it\Delta}u_0\|_{L^pL^q} \leq C_T \|u_0\|_{W^{\frac 2p,2}}.$$
Therefore Strichartz estimates with loss of derivatives are interesting for a loss smaller than $2/p$.

Let us now set the general framework of our study.
We consider $(X, d, \mu)$ a metric measured space equipped with a nonnegative $\sigma$-finite Borel measure $\mu$. 
We assume moreover that $\mu$ is Alfhor regular, that is there exist a dimension $d$, and two 
absolute positive constants $c$ and $C$ such that for all $x \in X$ and $r > 0$
\begin{equation} \label{ah}
 c r^{d} \leq \mu(B(x, r)) \leq C r^{d},
\end{equation}
where $B(x,r)$ denote the open ball with center $x \in X$ and radius $0<r<\text{diam}(X)$.
Thus we aim our results to apply in numerous cases of metric spaces such as open subsets of $\mathbb R^d$, smooth $d$-manifolds, some fractal sets, Lie groups, Heisenberg group, \dots

Keeping in mind the canonical example of the Laplacian operator in $\mathbb R^d$: $\Delta = \sum_{1\leq j \leq d} \partial_j^2$, we will be more general in the following sense:
we consider a nonnegative, self-adjoint operator $H$ on $L^2=L^2(X,\mu)$ densely defined, which means that its domain 
$$ {\mathcal D}(H):=\{f \in L^2,\, Hf \in L^2\}$$
is supposed to be dense in $L^2$. 

One of the motivation of our paper is to study the connection between the wave equation and the Schr\"odinger equation. We define the wave propagator $\cos(t \sqrt H)$ as follows: 
for any $f\in L^2$, $u(t, \ldotp):=t\mapsto \cos(t \sqrt H)f$ is the unique solution of the linear wave problem
\begin{equation} \label{wave_equation} \begin{cases}
   &\partial_t^2u + Hu=0 \\
   &u_{|t=0} = f\\
   &\partial_t u_{|t=0} = 0.
  \end{cases}
  \end{equation}
One can find the explicit solutions of this problem in \cite{Fol} for the Euclidean 
case and in \cite{Berard} for the Riemannian manifold case through precise formula 
for the kernel of the wave propagator. Up to our knowledge, those explicit solutions are not available in our abstract setting. 
It would be of great interest to be able to compute exact expression of the solution of the wave equation in such a general case.
The remarkable property of this operator comes from its finite speed propagation: 
for any two disjoint open subsets $U_1,U_2 \subset X$, and any functions $f_i \in L^2(U_i)$, $i=1,2$, then
\begin{equation}\label{speed_propagation}
 \langle \cos(t\sqrt H)f_1,f_2 \rangle =0
\end{equation}
for all $0<t<d(U_1,U_2)$. If $\cos(t\sqrt H)$ is an integral operator with kernel $K_t$, 
then \eqref{speed_propagation} simply means that $K_t$ is supported in the ``light cone'' 
$\mathcal D_t:=\{(x,y) \in X^2,\ d(x,y)\leq t\}$.
We assume that $H$ satisfies \eqref{speed_propagation}. In \cite{CS}, Coulhon and Sikora proved that this property is equivalent to the Davies-Gaffney estimates
\begin{equation}\label{davies_gaffney_estimates}
\|e^{-tH}\|_{L^2(E) \to L^2(F)} \lesssim e^{- \frac{d(E,F)^2}{4t}}
\end{equation}
for any two subsets $E$ and $F$ of $X$, and $t>0$. \newline
It is known that $-H$ is the generator of a $L^2$-holomorphic semigroup $(e^{-tH})_{t\geq0}$ 
(see \cite{Davies}).
We will also assume that the heat semigroup $(e^{-tH})_{t\geq0}$ satisfies the typical upper estimates 
(for a second order operator): for every $t>0$ the operator $e^{-tH}$ admits a kernel $p_t$ with
\begin{equation}\tag{$DU\!E$}
|p_{t}(x,x)|\lesssim
\frac{1}{\mu(B(x,\sqrt{t}))}, \quad \forall~t>0,\,\mbox{a.e. }x\in X.\label{due}
\end{equation}

It is well-known that such on-diagonal pointwise estimates self-improve into the 
full pointwise Gaussian estimates (see \cite[Theorem 1.1]{Gr1} or \cite[Section 4.2]{CS} e.g.)
\begin{equation}\tag{$U\!E$}
|p_{t}(x,y)|\lesssim
\frac{1}{\mu(B(x,\sqrt{t}))}\exp
\left(-c\frac{d(x,y)^2}{t}\right), \quad \forall~t>0,\, \mbox{a.e. }x,y\in
 X.\label{UE}
\end{equation}
One can find in \cite{BS} and the references therein some examples where the previous estimates hold.

When dealing with Schr\"odinger equation on a manifold or a more general metric space, the $L^1-L^\infty$ estimate \eqref{disp_L1_Linfty} seems out of reach. 
In \cite{BS}, the authors show how to replace it by a $H^1-\BMO$ estimate, with the Hardy space $H^1$ and the Bounded Mean Oscillations space $\BMO$ both adapted to the semigroup. 
We do not recall the definition of those spaces here, but refer to \cite{BS} for more details.

For any integer $m \geq 0$ and $x \in \mathbb R_+$ we set $\psi_m(x)=x^m e^{-x}$. It forms a family of smooth functions that vanish at $0$ (except when $m=0$) and infinity, which allows us to consider 
a smooth partition of unity, using holomorphic functionnal calculus (and requiring $\mathbb C^\infty_0$-calculus). 
\\
The main assumption of our work is the following
\begin{ass}\label{assumption_cos}
 There exist $\kappa  \in (0, \infty]$ and an integer $m$ such that for every $s\in(0,\kappa)$ the wave propagator $\cos(s \sqrt H)$ at time $s$ satisfies the following dispersion property
 \begin{equation}\label{hypothesis}
  \|\cos(s\sqrt H)  \psi_m(r^2H)\|_{L^2(B) \to L^2(\widetilde B)} \lesssim \left( \frac{r}{r+s} \right)^{\frac{d-1}{2}},
 \end{equation}
 for any two balls $B$, $\widetilde B$ of radius $r>0$.
\end{ass}
This estimate is microlocalized in the physical space due to the balls $B$ and $\widetilde B$ at scale $r$ and in frequency at scale $\frac 1r$ through $\psi_m(r^2H)$, thus respecting the Heisenberg uncertainty principle.
The parameter $\kappa$ is linked to the geometry of the space $X$ (its injectivity radius for instance). 

In the Euclidean space $\mathbb R^d$, the $L^2(B)-L^2(\widetilde B)$ dispersion phenomenon seems only to depend on the distance $d(B, \widetilde B)$. Indeed, the intuition is that, in an isotropic medium a wave propagates the same way in all the directions.
That is what leads us to think that Assumption \ref{assumption_cos} could be proved without using a pointwise explicit formula of its kernel, but with a more general approach, using functional tools only, that could be extend to other settings. 
To our knowledge the study of such behavior is not known and could be a good direction to investigate. 

We mentioned that the finite speed propagation property \eqref{speed_propagation} gives us the idea that after time $s$ the solution to the wave problem \eqref{wave_equation} with initial data supported in a ball of radius $r$ is supported in a ball of radius $r+s$.
Given that $r\leq s$ (otherwise $L^2$ functional calculus yields Assumption \ref{assumption_cos}) and the fact that waves propagate the same way in all directions in an isotropic medium, if we cover the sphere of radius $r+s$ by $N \simeq \left( \frac{r+s}{r} \right)^{d-1}$ balls of radius $r$ and use Theorem \ref{thm_calcul_fonctionnel}, 
we can conjecture that the term $\left( \frac{r}{r+s} \right)^{\frac{d-1}{2}}$ is the natural dispersion one can hope for such waves, if we look for a uniform estimate (depending only on $r$, $s$).  

Indeed we also emphasize that Assumption \ref{assumption_cos} is weaker than the one in \cite{BS}, namely: \\
There exist $\kappa  \in (0, \infty]$ and an integer $m$ such that for every $s\in(0,\kappa)$ we have
\begin{equation}\label{hyp_cossH_intro}
 \|\cos(s\sqrt H) \psi_m(r^2H)\|_{L^2(B) \to L^2(\widetilde B)} \lesssim \left( \frac{r}{r+s} \right)^{\frac{d-1}{2}} \left( \frac{r}{r+ |s-L|} \right)^{\frac{d+1}{2}},
\end{equation}
where $L=d(B, \widetilde B)$, which describes more precisely the dispersion inside the light cone. \\
However \eqref{hyp_cossH_intro} can be difficult to prove in an abstract setting. That is why we are interested in proving what Assumption \ref{assumption_cos} could imply as far as Strichartz estimates are concerned. 
Estimate \eqref{hypothesis} should indeed be much easier to prove in concrete examples. 
For more on Assumption \ref{assumption_cos}, see Subsection \ref{subsection_motivations}. 
Consequently the results we obtain will be weaker too.
We recall Theorem 1.3 from \cite{BS} in order to compare it with our Theorem \ref{thm_2}.

\begin{thm}[\cite{BS}]\label{thm_BS} Suppose \eqref{ah} with $d>1$, \eqref{due} and
Assumption \eqref{hyp_cossH_intro} with $\kappa\in(0,\infty]$. Two cases occur: 
\begin{itemize}
\item if $\kappa=\infty$ then we have Strichartz estimates without loss of derivatives;
\item if $\kappa<\infty$ then for every $\varepsilon>0$, every $0<h\leq1$ with
$h^2\leq |t|<h^{1+\varepsilon}$ we have Strichartz estimates with loss of $\frac{1+\varepsilon}{p}$ derivatives.
\end{itemize}
\end{thm}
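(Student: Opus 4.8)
The plan is to follow the two-stage reduction of \cite{BS}. \emph{Stage one} reduces the Strichartz estimates to frequency-localized $H^1$--$\BMO$ dispersive bounds for the Schr\"odinger group. Concretely, I would aim to prove that for $h\in(0,1]$ and $|t|$ in a suitable range one has $\|e^{-itH}\psi_m(h^2H)\|_{H^1\to\BMO}\lesssim |t|^{-d/2}$; interpolating this with the trivial bound $\|e^{-itH}\psi_m(h^2H)\|_{L^2\to L^2}\lesssim1$ gives $L^{q'}\to L^q$ estimates for each dyadic frequency block, and the semigroup-adapted form of the $TT^{*}$/Keel--Tao argument then upgrades these to frequency-localized Strichartz estimates. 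If the dispersive bound holds for all relevant $|t|$ (the case $\kappa=\infty$), the blocks carry a scale-invariant Strichartz constant and the Littlewood--Paley square function closes the estimate with no loss; if it is available only on a window $h^2\le|t|<h^{1+\varepsilon}$ (the case $\kappa<\infty$), covering a fixed time interval requires $\sim h^{-1-\varepsilon}$ such windows, and the $\ell^p$-in-time cost $h^{-(1+\varepsilon)/p}$ of this iteration becomes, after the square-function sum, a loss of $\frac{1+\varepsilon}{p}$ derivatives.

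\emph{Stage two} is the heart of the matter: deriving the dispersive estimate from the wave input \eqref{hyp_cossH_intro}. Here I would use the transmutation identity
\[ e^{-itH}=\frac{1}{\sqrt{4\pi i t}}\int_{\R}e^{is^2/(4t)}\cos(s\sqrt H)\,ds, \]
obtained from the scalar Fresnel formula by spectral calculus, insert the cutoff $\psi_m(h^2H)$, and exploit finite speed of propagation \eqref{speed_propagation}: the kernel of $\cos(s\sqrt H)\psi_m(h^2H)$ is supported in the light cone $\mathcal D_s$, so decomposing $X$ into balls of radius $r\sim h$ reduces the $H^1$--$\BMO$ operator norm, via the atomic/molecular descriptions of these semigroup-adapted spaces, to a weighted sum of the local norms $\|\cos(s\sqrt H)\psi_m(r^2H)\|_{L^2(B)\to L^2(\widetilde B)}$, which \eqref{hyp_cossH_intro} bounds by $\bigl(\tfrac{r}{r+s}\bigr)^{(d-1)/2}\bigl(\tfrac{r}{r+|s-L|}\bigr)^{(d+1)/2}$ with $L=d(B,\widetilde B)$. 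Summing this over the $\sim(s/r)^{d-1}$ balls $\widetilde B$ tiling the cone shell $\{d(\cdot,B)\approx L\}$ and then over all shells $L$ is exactly what the exponents $\tfrac{d-1}{2}$ and $\tfrac{d+1}{2}$ are calibrated to make integrable and concentrated near $L\approx s$. The remaining decay $|t|^{-d/2}$ comes from stationary phase in $s$: the Schr\"odinger phase $s^2/(4t)$ competes with the oscillation of $\cos(s\sqrt H)\psi_m(h^2H)$ at frequency $\sim 1/h$, with stationary point $s_\ast\sim t/h$; one splits the $s$-integral near $s_\ast$, where \eqref{hyp_cossH_intro} is applied directly and gives the main term, and away from it, where repeated integration by parts in $s$ produces the decay.

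When $\kappa=\infty$ this analysis runs over all of $s\in\R$ and yields the dispersive bound for every $|t|\gtrsim h^2$, hence Strichartz without loss. When $\kappa<\infty$ the wave estimate is available only for $s\in(0,\kappa)$; the stationary point $s_\ast\sim t/h$ lies in this range precisely when $|t|\lesssim\kappa h$, and the tail $|s|\ge\kappa$, handled by non-stationary integration by parts, is negligible only when $|t|/(\kappa h)\lesssim h^\varepsilon$, i.e. $|t|<h^{1+\varepsilon}$ --- this is where the $\varepsilon$ first enters. Combined with the $\sim h^{-1-\varepsilon}$ iterations of stage one, this produces the $\frac{1+\varepsilon}{p}$-derivative loss.

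I expect the main obstacle to be stage two: turning the purely $L^2$, doubly localized wave estimate into a \emph{global} $H^1$--$\BMO$ statement for Schr\"odinger. One must carry the atomic/molecular machinery of the semigroup-adapted Hardy and $\BMO$ spaces through the decomposition, control the double summation over cone shells and over balls within each shell \emph{without} destroying the stationary-phase gain coming from the oscillatory weight $e^{is^2/(4t)}$, and justify all the functional-calculus manipulations via the $\mathbb C^\infty_0$/holomorphic calculus attached to $\psi_m$. A secondary but delicate point is the bookkeeping in the $\kappa<\infty$ case that converts the crude bound on the $|s|\ge\kappa$ tail into exactly the stated $\frac{1+\varepsilon}{p}$ loss.
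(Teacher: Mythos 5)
First, note that the paper does not actually prove Theorem \ref{thm_BS}: it is quoted from \cite{BS}, and the closest in-paper material is the parallel argument for Theorem \ref{thm_2} (Theorems \ref{demo_hyp_Hmn}, \ref{thm1_BS}, \ref{KeelTao}, \ref{strichartz_localised}, \ref{thm_strichartz_estimates}). Measured against that, your architecture is the right one and essentially the same as \cite{BS}: reduce the $H^1$--$\BMO$ dispersive bound to the microlocalized $L^2(B)\to L^2(\widetilde B)$ estimate (this is exactly the black-box Theorem \ref{thm1_BS}, which you propose to re-derive by hand with atoms and cone shells), transmute the wave bound \eqref{hyp_cossH_intro} into Schr\"odinger dispersion, then Keel--Tao plus semigroup interpolation plus Littlewood--Paley, with the $\frac{1+\varepsilon}{p}$ loss coming from covering $[-1,1]$ by $\sim h^{-1-\varepsilon}$ time windows. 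Your accounting of where the loss comes from in the $\kappa<\infty$ case matches the paper's.

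The genuine weak point is your stage two as written. You use the purely oscillatory Fresnel identity $e^{-itH}=\frac{1}{\sqrt{4\pi i t}}\int_{\R}e^{is^2/(4t)}\cos(s\sqrt H)\,ds$ and then invoke ``stationary phase in $s$, with the phase $s^2/(4t)$ competing with the oscillation of $\cos(s\sqrt H)\psi_m(h^2H)$ at frequency $\sim 1/h$.'' In this abstract framework there is no kernel, no phase function, and no asymptotics for $\cos(s\sqrt H)$ --- only the operator-norm bound \eqref{hyp_cossH_intro} --- so you cannot extract cancellation from the wave propagator, and the oscillatory $s$-integral is not even absolutely convergent, since the integrand is merely $O(1)$ in operator norm. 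The way the paper (following \cite{BS}) circumvents precisely this is to absorb the heat factor contained in the frequency cutoff: $e^{itH}\psi_{0}(h^2H)=e^{-zH}$ with $z=h^2-i t$, and to use Hadamard's formula \eqref{hadamard} at this complex time. The weight $e^{-s^2/(4z)}$ then has Gaussian \emph{modulus} decay at scale $s\sim |t|/h$ (since $\Re\frac1z\gtrsim h^2/t^2$), which provides both absolute convergence and the localization you wanted from a stationary point, with no oscillation argument needed; the small-$s$ region is handled by integrations by parts against a cutoff $\chi$ supported in $s\lesssim |t|/r$, and the region $s\geq\kappa$ is killed by the factor $e^{-\kappa^2h^2/(16t^2)}$ --- this tail is the only place where $|t|\leq h^{1+\varepsilon}$ is used (choosing the number of gained powers $N$ large in terms of $\varepsilon$), which is a cleaner mechanism than your ``stationary point must lie in $(0,\kappa)$'' heuristic. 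If you try to run your version rigorously, you will be forced to regularize the Fresnel integral, and the natural regularization is exactly the complex-time Hadamard formula; so the fix is to replace your stationary-phase step by the paper's Gaussian-weighted computation (as in the proof of Theorem \ref{demo_hyp_Hmn}, here with the extra factor $\bigl(\frac{r}{r+|s-L|}\bigr)^{\frac{d+1}{2}}$ restoring the full $|t|^{-d/2}$ decay and removing the $h^{-2}$ loss). The rest of your outline then goes through as in \cite{BS}.
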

To prove this, the authors first reduced the $H^1-\BMO$ estimation to a microlocalized $L^2-L^2$ estimate, and then showed how dispersion for the wave propagator implies dispersion for the Schr\"odinger group. 
Theorem \ref{demo_hyp_Hmn} is playing that role in the present paper.

Our main theorem follows the routine of \cite{BS} to deduce Strichartz inequalities from $L^2-L^2$ estimates.
\begin{thm}\label{thm_2}
 Assume \eqref{ah} with $d>2$, \eqref{due}, and Assumption \ref{assumption_cos}. Then for every $2\leq p \leq +\infty$ and $2\leq q < +\infty$ satisfying $$\frac 2p + \frac{d-2}{q} = \frac{d-2}{2},$$
 and every solution $u(t,\ldotp)=e^{itH}u_0$ of the problem
 $$\begin{cases}
  &i \partial_t u + Hu=0\\
  &u_{|t=0}=u_0,
 \end{cases}$$ we have 
 \begin{itemize}
  \item if $\kappa = \infty$, then $u$ satisfies local-in-time Strichartz estimates with loss of derivatives 
  \begin{equation}\label{strichartz_estimates_intro_global}
   \|u\|_{L^p([-1,1], L^q)} \lesssim \|u_0\|_{W^{2(\frac12-\frac 1q),2}};
  \end{equation}
  \item if $\kappa < \infty$, then for every $\varepsilon >0$, $u$ satisfies local-in-time Strichartz estimates with loss of derivatives
  \begin{equation}\label{strichartz_estimates_intro}
   \|u\|_{L^p([-1,1], L^q)} \lesssim \|u_0\|_{W^{\frac{1+\varepsilon}{p}+2(\frac 12-\frac 1q),2}}.
  \end{equation}
 \end{itemize}
\end{thm}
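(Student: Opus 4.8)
The plan is to follow the three–step strategy of \cite{BS}: decompose $e^{itH}u_0$ into dyadic frequency blocks adapted to $H$, derive from the weak wave dispersion \eqref{hypothesis} a microlocalized dispersive estimate for each block, then feed it into the (Hardy space version of the) $TT^{\ast}$/Keel--Tao argument and re-sum the blocks while keeping track of the derivative loss. For the first step I would use Theorem \ref{thm_calcul_fonctionnel} and the $\psi_m$-calculus to fix a dyadic quasi-partition of unity, $\phi_j(\lambda)=\psi_m(2^{-2j}\lambda)/\sum_k\psi_m(2^{-2k}\lambda)$, so that $\phi_j(H)$ localizes at $\sqrt H\sim 2^j$ and $e^{itH}u_0=\sum_j e^{itH}\phi_j(H)u_0$. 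Since \eqref{ah} and \eqref{due} force the full Gaussian bound \eqref{UE}, $H$ enjoys the $L^q$ square function estimate $\|f\|_{L^q}\lesssim\big\|(\sum_j|\phi_j(H)f|^2)^{1/2}\big\|_{L^q}$ for $1<q<\infty$; combined with the triangle inequality in $L^{p/2}_t L^{q/2}_x$ (legitimate because $p,q\ge 2$) this gives
\[
\|e^{itH}u_0\|_{L^p([-1,1],L^q)}\ \lesssim\ \Big(\sum_j \|e^{itH}\phi_j(H)u_0\|_{L^p([-1,1],L^q)}^2\Big)^{1/2}.
\]
Writing $h=2^{-j}$, it then suffices to bound $\|e^{itH}\phi_j(H)u_0\|_{L^p([-1,1],L^q)}\lesssim h^{-\alpha}\|\phi_j(H)u_0\|_{L^2}$ with $\alpha=2(\tfrac12-\tfrac1q)$ if $\kappa=\infty$ and $\alpha=\tfrac{1+\varepsilon}{p}+2(\tfrac12-\tfrac1q)$ if $\kappa<\infty$, and to conclude by the $L^2$ almost-orthogonality $\sum_j 2^{2j\alpha}\|\phi_j(H)u_0\|_{L^2}^2\simeq\|u_0\|_{W^{\alpha,2}}^2$.

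The heart is the microlocalized dispersive estimate, which is Theorem \ref{demo_hyp_Hmn}. I would start from the subordination identity $e^{itH}=c_t\int_{\R}e^{-is^2/(4t)}\cos(s\sqrt H)\,ds$, $|c_t|\simeq|t|^{-1/2}$, in the spectral calculus, insert $\psi_m(h^2H)$, and analyse the resulting oscillatory integral ball by ball. For a source ball $B$ and a target ball $\widetilde B$ of radius $h$ at distance $L=d(B,\widetilde B)$, finite speed of propagation \eqref{speed_propagation} restricts the $s$-integration to $|s|\gtrsim L$; on the block $\sqrt H\sim 1/h$ the Gaussian phase oscillates, so an integration by parts in $s$ (together with crude bounds for $|s|$ away from the relevant scale $|s|\simeq|t|/h$) reduces the estimate to an application of \eqref{hypothesis} on that scale, which is legitimate provided $|t|/h\lesssim\kappa$. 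Summing the ball-to-ball bounds over a covering of $X$ by $h$-balls — the off-diagonal terms being absorbed by the Gaussian/finite-speed decay via a Schur test — and passing from the microlocalized $L^2$--$L^2$ bound to an $H^1$--$\BMO$ bound through the factor $\mu(B(x,h))^{-1}\simeq h^{-d}$ from \eqref{ah}--\eqref{UE}, I expect to reach
\[
\|e^{itH}\psi_m(h^2H)\|_{H^1\to\BMO}\ \lesssim\ h^{-2}\,|t|^{-\frac{d-2}{2}},
\]
valid for all $0<|t|\le1$ when $\kappa=\infty$, and for $h^2\le|t|<h^{1+\varepsilon}$ (so that $|t|/h<h^{\varepsilon}<\kappa$ for $h$ small) when $\kappa<\infty$; for $|t|<h^2$ the trivial bound $h^{-d}\le h^{-2}|t|^{-(d-2)/2}$ suffices. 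The hypothesis $d>2$ is what makes the exponent $(d-2)/2$ positive, i.e. what makes this genuine dispersion.

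For the third step, interpolating the above with $\|e^{itH}\psi_m(h^2H)\|_{L^2\to L^2}\lesssim 1$ gives $\|e^{itH}\psi_m(h^2H)\|_{L^{q'}\to L^q}\lesssim(h^{-2}|t|^{-(d-2)/2})^{1-2/q}$, whose homogeneity exponent $(d-2)(\tfrac12-\tfrac1q)$ is precisely the admissibility condition $\tfrac2p+\tfrac{d-2}{q}=\tfrac{d-2}{2}$ of the statement. Running the Keel--Tao machinery \cite{KT} in the Hardy space form of \cite{BS}: when $\kappa=\infty$ the dispersive estimate holds on all of $[-1,1]$, so $\|e^{itH}\phi_j(H)u_0\|_{L^p([-1,1],L^q)}\lesssim h^{-2(\frac12-\frac1q)}\|\phi_j(H)u_0\|_{L^2}$, and re-summing as above yields \eqref{strichartz_estimates_intro_global}. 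When $\kappa<\infty$ the estimate is only available on a $t$-interval of length $\simeq h^{1+\varepsilon}$; tiling $[-1,1]$ by $\simeq h^{-(1+\varepsilon)}$ such intervals, applying the ($t$-translation invariant) $TT^{\ast}$ estimate on each and summing the $\ell^p$ of the pieces (unitarity absorbing the translations) costs an extra factor $h^{-(1+\varepsilon)/p}$, so $\|e^{itH}\phi_j(H)u_0\|_{L^p([-1,1],L^q)}\lesssim h^{-\frac{1+\varepsilon}{p}-2(\frac12-\frac1q)}\|\phi_j(H)u_0\|_{L^2}$, and re-summing yields \eqref{strichartz_estimates_intro}.

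The one genuinely new difficulty is the second step. Because \eqref{hypothesis}, unlike \eqref{hyp_cossH_intro}, carries no information distinguishing the interior of the light cone from its boundary, the only room for gain when summing the wave dispersion over a covering of the cone comes from the oscillation of the phase $e^{-is^2/(4t)}$ in the transmutation integral, combined with finite speed of propagation \eqref{speed_propagation}. Making this rigorous amounts to controlling an oscillatory integral whose amplitude $\cos(s\sqrt H)\psi_m(h^2H)$ is operator-valued and not a priori smooth in $s$, to balancing this oscillation against the growth of the range of relevant times, and to checking that the bookkeeping survives in the borderline dimension $d=3$. This is where essentially all the work lies; Steps 1 and 3 are the by-now-standard reductions of \cite{BS}.
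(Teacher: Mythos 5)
Your Steps 1 and 3 do match the paper's architecture (Littlewood--Paley re-summation, the $H^1$--$\BMO$ reduction of \cite{BS}, Keel--Tao with constant tracking, and the tiling of $[-1,1]$ into $\simeq h^{-(1+\varepsilon)}$ intervals giving the extra $h^{-(1+\varepsilon)/p}$), but the proposal has a genuine gap exactly where you place it yourself: the microlocalized dispersive estimate of Theorem \ref{demo_hyp_Hmn} is not proved, only conjectured (``I expect to reach\dots'', ``this is where essentially all the work lies''). Since this estimate is the whole new content of the theorem --- everything else is quoted from \cite{BS} and \cite{KT} --- deferring it means there is no proof. Moreover, the estimate you would need is not the one you sketch: to invoke Theorem \ref{thm1_BS} (hypothesis \eqref{disp}) one must prove
\begin{equation*}
\|e^{itH}\psi_{m'}(h^2H)\,\psi_m(r^2H)\|_{L^2(B)\to L^2(\widetilde B)}\ \lesssim\ \frac{r^d}{|t|^{\frac{d-2}{2}}h^{2}}
\end{equation*}
for balls of \emph{every} radius $r>0$ (with the extra localization $\psi_m(r^2H)$), because $H^1$ atoms are attached to balls of arbitrary radius; your sketch only treats balls at the single scale $r=h$, so the passage to $H^1\to\BMO$ ``through the factor $\mu(B(x,h))^{-1}\simeq h^{-d}$'' is not justified. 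The paper handles this by explicit reductions ($h\le r$, $m'=0$, $r^2\le|t|$, shifting $m'$ at the cost of $(h/r)^{2m'}$) before attacking the transmutation integral.

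The mechanism you propose for the gain is also not the one that works under the weak hypothesis \eqref{hypothesis}. In the paper's proof of Theorem \ref{demo_hyp_Hmn}, the Hadamard formula \eqref{hadamard} is split at $s\simeq|t|/r$ (not $|t|/h$, and not at $s\gtrsim d(B,\widetilde B)$): on $s\lesssim|t|/r$ one integrates by parts $2n$ times against a cutoff $\chi$, each pair of $s$-derivatives of $\cos(s\sqrt H)$ producing a factor $H$ that is absorbed by $\psi_m(r^2H)$ at the cost of $r^2$, which is why $m\geq\lceil d/2\rceil$ is needed; on $|t|/r\le s\le\kappa$ there is no further oscillation argument at all --- one simply integrates the uniform bound $(r/(r+s))^{(d-1)/2}$ from \eqref{hypothesis} against the Gaussian weight $e^{-s^2\Re(1/z)/4}\,ds/\sqrt{|t|}$, with separate bookkeeping for $d>3$, $d=3$ and $2<d<3$, to get $r^{d-2}|t|^{-(d-2)/2}\le r^d|t|^{-(d-2)/2}h^{-2}$; and the tail $s>\kappa$ is controlled by the Gaussian decay alone, which is the only place the restriction $|t|\le h^{1+\varepsilon}$ (hence the $\varepsilon$-loss when $\kappa<\infty$) enters. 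No covering of the light cone by $r$-balls, Schur test, or stationary-phase-type gain over that covering is used or needed. So the ``genuinely new difficulty'' you isolate is real, but it is resolved in the paper by a concrete and fairly elementary computation that your outline neither performs nor correctly anticipates; as written, the proposal establishes the reductions around the main estimate but not the estimate itself.
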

We would like to point out that the straightforward loss of derivatives given by Sobolev embeddings when $$\frac 2p + \frac{d-2}{q} = \frac{d-2}{2}$$ is $$\frac 2p + 1-\frac 2q.$$ 
Thus the loss is nontrivial here. For more on the loss of derivatives, see Remarks \ref{remark_loss_of_derivatives} and \ref{remark_loss_of_derivatives_2}. 
It is interesting to see how a weak dispersion property on the wave propagator implies dispersion for the Schr\"odinger operator. 

The idea of the proof here is similar to the one in \cite{BS}. More particularly it is due to a precise tracking of the constants in some key estimations (from \cite{KT} for instance).
\\
The aim of this paper is to give a better understanding of how dispersion for the wave propagator implies dispersion for the Schr\"odinger equation, and what Strichartz inequalities ensue in 
some contexts, where we do not have precise dispersive estimates on the wave propagators. 
In other words if one can compute, even inaccurate, information about the wave propagator in general settings, it would allow to have some knowledge of the Schr\"odinger equation in that framework.

The organization of the paper is as follow: 
In Section \ref{section_definitions} we set the notations used throughout the paper and recall some preliminary facts concerning the semigroup, Hardy and $\BMO$ spaces, as well as some motivations of our hypothesis. 
Then Section \ref{section_proofs} is dedicated to the proofs of the Theorems. 


\section{Preliminaries}\label{section_definitions}

\subsection{Notations}

We denote $\text{diam}(X):= \sup \limits_{x,y \in X} d(x,y)$ the diameter of a metric space $X$. 
For $B(x,r)$ a ball ($x \in X$ and $r>0$) and any parameter $\lambda >0$, we denote $\lambda B(x,r) := B(x, \lambda r)$ the dilated and concentric ball. 
As a consequence of \eqref{ah}, a ball $B(x, \lambda r)$ can be covered by $C \lambda^d$ balls of radius $r$, uniformly in $x\in X$, $>0$ and $\lambda >1$ ($C$ is a constant only depending on the ambient space). 
If no confusion arises, we will note $L^p$ instead of $L^p(X, \mu)$ for $p \in [1, +\infty]$. 
For $s>0$ and $p \in [1, +\infty]$, we denote by $W^{s,p}$ the Sobolev space of order $s$ based on $L^p$, equipped with the norm $$\|f\|_{W^{s,p}} := \|(1+H)^{\frac s2}f\|_{L^p}.$$
\\
We will use $u \lesssim v$ to say that there exists a constant $C$ (independent of the important parameters) such that $u \leq C v$ and $u \simeq v$ to say that both $u \lesssim v$ and $v \lesssim u$. 
If $\Omega$ is a set, $\mathds1_{\Omega}$ is the characteristic function of $\Omega$, defined by
$$\mathds1_{\Omega}(x) = \begin{cases}&1 \textrm{ if } x \in \Omega \\
&0 \textrm{ if } x \notin \Omega.
   \end{cases}$$

Throughout the paper, unless something else is explicitly mentioned, we assume that $d>2$ and that \eqref{ah}, \eqref{due}, \eqref{davies_gaffney_estimates}, and Assumption \ref{assumption_cos} are satisfied.

\subsection{The heat semigroup and associated functional calculus} \label{subsection_functional_calculus}

We consider a nonnegative, self-adjoint operator $H$ on $L^2=L^2(X,\mu)$ densely defined. 
We recall the bounded functional calculus theorem from \cite{RS}: 
\begin{thm}\label{thm_calcul_fonctionnel}
$H$ admits a $L^\infty$-functional calculus on $L^2$: if $f \in L^{\infty}(\mathbb R_+)$, then we may consider the
operator $f(H)$ as a $L^2$-bounded operator and
$$\|f(H)\|_{L^2 \to L^2}\leq \|f\|_{L^{\infty}}.$$
\end{thm}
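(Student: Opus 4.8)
The plan is to derive this directly from the spectral theorem for self-adjoint operators, of which the statement is the standard bounded Borel functional calculus corollary. So the proof will be essentially a quotation of spectral theory plus one short computation.

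First I would invoke the spectral theorem: since $H$ is self-adjoint and nonnegative, $\sigma(H)\subset[0,\infty)$, so there is a unique projection-valued measure $E$ on the Borel subsets of $\mathbb R_+$, with values in the orthogonal projections of $L^2$, such that $H=\int_{\mathbb R_+}\lambda\,dE(\lambda)$ on $\mathcal D(H)$. For $f\in L^\infty(\mathbb R_+)$ (more generally bounded Borel) I would then \emph{define} $f(H):=\int_{\mathbb R_+}f(\lambda)\,dE(\lambda)$, i.e. the $L^2$-bounded operator determined by $\langle f(H)u,v\rangle=\int_{\mathbb R_+}f\,d\langle E(\cdot)u,v\rangle$.

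Next, for each $u\in L^2$ I would introduce the scalar spectral measure $\mu_u(\Omega):=\langle E(\Omega)u,u\rangle=\|E(\Omega)u\|_{L^2}^2$, a finite positive Borel measure of total mass $\mu_u(\mathbb R_+)=\|u\|_{L^2}^2$ since $E(\mathbb R_+)=\mathrm{Id}$. Using that $f\mapsto f(H)$ is a $*$-homomorphism — in particular $f(H)^\ast f(H)=(|f|^2)(H)$ — I would compute
$$\|f(H)u\|_{L^2}^2=\langle (|f|^2)(H)u,u\rangle=\int_{\mathbb R_+}|f(\lambda)|^2\,d\mu_u(\lambda)\le \|f\|_{L^\infty}^2\,\mu_u(\mathbb R_+)=\|f\|_{L^\infty}^2\,\|u\|_{L^2}^2,$$
and taking the supremum over unit vectors $u$ would give $\|f(H)\|_{L^2\to L^2}\le\|f\|_{L^\infty}$.

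The only genuinely nontrivial input is the existence of the spectral measure $E$ and the $*$-homomorphism property for the possibly unbounded $H$; this is classical (one reduces, via the Cayley transform $(H-i)(H+i)^{-1}$, to a unitary operator, builds the continuous functional calculus from the polynomial calculus using the estimate $\|p(A)\|=\sup_{\sigma(A)}|p|$, and then extends from continuous to bounded Borel $f$ by a bounded convergence argument), and it is exactly what is cited from \cite{RS}. I do not expect any obstacle beyond quoting this result; once it is granted, the operator norm bound is immediate from the displayed computation.
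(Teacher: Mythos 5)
Your proof is correct: the paper gives no argument of its own for this theorem but simply cites the classical bounded functional calculus result from the reference, and your spectral-theorem computation (projection-valued measure, $*$-homomorphism, scalar spectral measures $\mu_u$) is exactly the standard argument underlying that citation. Nothing is missing, and the approach matches what the paper relies on.
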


From the Gaussian estimates of the heat kernel \eqref{UE} and the analyticity of the
semigroup (see \cite{CCO}) it comes that for every integer $m\in{\mathbb N}$ and every $t>0$, the operator $\psi_{m}(tH)$ has a kernel $p_{m,t}$ also satisfying upper
Gaussian estimates:
\begin{equation}
|p_{m,t}(x,y)|\lesssim
\frac{1}{\mu(B(x,\sqrt{t}))}\exp
\left(-c \frac{d(x,y)^2}{t}\right), \quad \forall~t>0,\, \mbox{a.e. }x,y\in
 X.\label{UEm}
\end{equation}

We now give some basic results about the heat semigroup thanks to our assumptions. The detailed proofs can be found in Section $2$ of \cite{BS}.

\begin{prop}\label{prop_continuite_semigroupe} Under \eqref{ah} and \eqref{due}, the
heat semigroup 
is uniformly bounded in every $L^p$-spaces for $p\in[1,\infty]$; more precisely for every $f\in L^p$, we have
$$ \sup_{t>0} \| e^{-tH} f \|_{L^p} \lesssim \|f\|_{L^p}.$$
Moreover, for $m \in \mathbb N$ and $t>0$, since $\psi_m(tH)$ also satisfies \eqref{due} we have $$ \sup_{t>0} \| \psi_m(tH) \|_{L^p \to L^p} \lesssim 1.$$
\end{prop}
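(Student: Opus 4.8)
The plan is to deduce the whole statement from a single kernel integrability estimate together with Schur's test. First I would invoke the cited self-improvement of \eqref{due} into the full Gaussian bound \eqref{UE}, so that $e^{-tH}$ is an integral operator whose kernel $p_t$ satisfies $|p_t(x,y)| \lesssim \mu(B(x,\sqrt t))^{-1}\exp(-c\,d(x,y)^2/t)$ for a.e. $x,y$ and all $t>0$.

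The core computation is to show
\[
\sup_{t>0}\ \esssup_{x\in X}\int_X |p_t(x,y)|\,d\mu(y) \;\lesssim\; 1,
\]
together with the analogous bound with the roles of $x$ and $y$ exchanged. These two are interchangeable because Ahlfors regularity \eqref{ah} gives $\mu(B(x,\sqrt t)) \simeq \mu(B(y,\sqrt t))$ whenever $\sqrt t < \diam(X)$, both being comparable to $t^{d/2}$; when $\sqrt t \ge \diam(X)$ one has $B(x,\sqrt t)=X$, the Gaussian factor is $\le 1$, and the bound is immediate. In the remaining range $\sqrt t < \diam(X)$ I would decompose $X$ into the spherical shells $A_k := \{y : k\sqrt t \le d(x,y) < (k+1)\sqrt t\}$, $k\ge 0$, estimate $\mu(A_k) \lesssim ((k+1)\sqrt t)^d$ by \eqref{ah} (replacing $(k+1)\sqrt t$ by $\diam(X)$ in the outermost nonempty shells), bound the Gaussian by $e^{-ck^2}$ on $A_k$, and sum the convergent series $\sum_{k\ge0}(k+1)^d e^{-ck^2}$; dividing by $\mu(B(x,\sqrt t)) \gtrsim t^{d/2}$ then yields the claim, uniformly in $t$ and $x$.

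Once both integrability bounds are established, Schur's test (equivalently, Riesz--Thorin interpolation between the $L^1\to L^1$ and $L^\infty\to L^\infty$ bounds it produces) immediately gives $\|e^{-tH}\|_{L^p\to L^p}\lesssim 1$ for every $p\in[1,\infty]$, uniformly in $t>0$. For the statement about $\psi_m(tH)$, I would note that $\psi_m$ is real-valued, so $\psi_m(tH)$ is self-adjoint, and that---as already recorded after \eqref{UEm}---its kernel $p_{m,t}$ obeys the very same Gaussian bound; hence the identical shell estimate and Schur's test yield $\sup_{t>0}\|\psi_m(tH)\|_{L^p\to L^p}\lesssim 1$.

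I do not anticipate a genuine obstacle: granted \eqref{UE}, the argument is standard. The only points deserving attention are the case distinction according to whether $\sqrt t$ is smaller or larger than $\diam(X)$ (relevant only when $X$ is bounded, and where the Ahlfors upper bound must be replaced by $\mu(X)$ for the outermost shells), and the elementary verification that $\sum_{k\ge0}(k+1)^d e^{-ck^2}$ converges.
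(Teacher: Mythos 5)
Your proof is correct and follows the standard route: the paper itself gives no proof of this proposition, deferring to Section 2 of \cite{BS}, and the argument there is essentially the one you give --- the self-improved Gaussian bound \eqref{UE} (resp.\ \eqref{UEm}) plus Ahlfors regularity \eqref{ah} yields a uniform bound on $\sup_x \int_X |p_t(x,y)|\,d\mu(y)$ via the annular decomposition, and Schur's test (interpolating the $L^1$ and $L^\infty$ bounds) gives uniform $L^p$-boundedness for all $p\in[1,\infty]$. Your handling of the bounded-space case ($\sqrt t$ versus $\diam(X)$) and the transfer to $\psi_m(tH)$ via \eqref{UEm} are both sound.
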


Let us now define some tools for the Littlewood-Paley theory we need in the sequel. For all $\lambda >0$ we set
$$\varphi(\lambda) := \int_{\lambda}^{+\infty} \psi_m(u) \frac{du}{u},$$
$$\tilde \varphi(\lambda) := \int_0^{\lambda} \psi_m(v) \frac{dv}{v} = \int_0^1 \psi_m(\lambda u) \frac{du}{u}.$$

\begin{rem}
 Notice that $\varphi$ is, by integration by parts, a finite linear combination of
 functions $\psi_{k}$ for $k\in\{0,..,m\}$. Moreover for every $\lambda>0$,  
  $$\tilde\varphi(\lambda)+\varphi(\lambda)=\int_0^{+\infty}u^{m-1}e^{-u}du=\Gamma(m)=\textrm{constant}.$$
\end{rem}

The following theorem will be useful to estimate the $L^p$-norm through the heat
semigroup: 

\begin{thm}\label{thm_LP} Assume \eqref{ah} and \eqref{due}. For every integer
$m\geq 1$ and all $p\in(1,\infty)$, we have
\begin{align*}
 \|f\|_{L^p} & \simeq  \|\varphi(H)f\|_{L^p}+\left\| \left(\int_0^1
|\psi_{m}(uH)f|^2\frac{du}u\right)^{\frac 12} \right\|_{L^p}. 
\end{align*}
So if $q\geq 2$
\begin{align*}
 \|f\|_{L^q} & \lesssim  \|\varphi(H)f\|_{L^q}+ \left(\int_0^1
\|\psi_{m}(uH)f\|_{L^q}^2\frac{du}u\right)^{\frac 12}. 
\end{align*}
\end{thm}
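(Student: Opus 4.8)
The plan is to deduce the $L^p$-equivalence from a standard Littlewood--Paley / square function argument adapted to the heat semigroup, using only the Gaussian bounds \eqref{UE}, \eqref{UEm} and the uniform $L^p$-boundedness of $\psi_m(tH)$ recorded in Proposition \ref{prop_continuite_semigroupe}. First I would verify the \emph{reproducing formula}: since $\tilde\varphi(\lambda)+\varphi(\lambda)=\Gamma(m)$ is constant, normalizing we may write $\mathrm{Id} = c_m\varphi(H) + c_m\tilde\varphi(H)$ on $L^2$ (and, by density and the uniform bounds of Proposition \ref{prop_continuite_semigroupe}, on each $L^p$, $1<p<\infty$), where $\tilde\varphi(H)f = \int_0^1 \psi_m(uH)f\,\frac{du}{u}$. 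This already gives the easy inequality $\|f\|_{L^p}\lesssim \|\varphi(H)f\|_{L^p}+\big\|\big(\int_0^1|\psi_m(uH)f|^2\frac{du}u\big)^{1/2}\big\|_{L^p}$ after one applies Minkowski's integral inequality inside $L^p$ together with Cauchy--Schwarz in the measure $\frac{du}u$ on $(0,1)$ — this is where finiteness of $\int_0^1\frac{du}u$ fails, so instead one pairs with a dual square function, see below. The reverse inequality is the substantive one.

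For the reverse inequality, the key step is to establish that the vertical square function operator
$$Sf(x):=\Big(\int_0^1 |\psi_m(uH)f(x)|^2\,\frac{du}{u}\Big)^{1/2}$$
is bounded on $L^p$ for all $p\in(1,\infty)$. The plan is to realize $S$ as a vector-valued Calder\'on--Zygmund operator: the kernel of $f\mapsto (\psi_m(uH)f)_{u\in(0,1)}$, viewed with values in $L^2((0,1),\frac{du}u)$, inherits from \eqref{UEm} size and regularity bounds of Calder\'on--Zygmund type with respect to the space of homogeneous type $(X,d,\mu)$ (here \eqref{ah} supplies the doubling property). Its $L^2\to L^2$ boundedness is immediate from the spectral theorem, since $\int_0^1|\psi_m(u\lambda)|^2\frac{du}u \le \int_0^\infty |\psi_m(v)|^2\frac{dv}v=\frac12\Gamma(2m)<\infty$ uniformly in $\lambda$. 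The vector-valued Calder\'on--Zygmund theory on spaces of homogeneous type (as in \cite{CS}, or standard references) then yields $L^p$-boundedness for $1<p<\infty$, and, by a duality/polarization argument using the reproducing formula, the matching lower bound $\|\varphi(H)f\|_{L^p}+\|Sf\|_{L^p}\lesssim \|f\|_{L^p}$: one tests $f$ against $g\in L^{p'}$, writes $\langle f,g\rangle = c_m\langle \varphi(H)f,g\rangle + c_m\int_0^1\langle \psi_m(uH)f,\psi_{\tilde m}(uH)g\rangle\frac{du}u$ for a suitable companion index $\tilde m$, and applies Cauchy--Schwarz in $\frac{du}u$ followed by H\"older in $x$ and the $L^{p'}$-bound for the dual square function.

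The main obstacle I expect is the verification of the Calder\'on--Zygmund regularity estimates for the vector-valued kernel — specifically the H\"older-type estimate in the first variable for $(u,x,y)\mapsto p_{m,u}(x,y)$ with values in $L^2(\frac{du}u)$ — since the Gaussian bound \eqref{UEm} on its own gives size but not smoothness. The standard remedy is to exploit the analyticity of the semigroup to control $\partial_u p_{m,u}$, equivalently to use that $u\partial_u\psi_m(uH) = \psi_{m+1}(uH)\cdot(\text{stuff}) $ again satisfies Gaussian bounds of the form \eqref{UEm}; time-derivative Gaussian bounds then translate into the required $L^2(\frac{du}u)$-valued Lipschitz/H\"older estimate via the mean value theorem. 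Once that kernel regularity is in hand, the rest is the routine machinery of vector-valued singular integrals on spaces of homogeneous type. Finally, the last displayed inequality of the theorem (for $q\ge2$) follows from the first by applying Minkowski's integral inequality to move the $L^q$-norm inside the $\big(\int_0^1 \cdot\,\frac{du}u\big)^{1/2}$, which is legitimate precisely because $q/2\ge1$.
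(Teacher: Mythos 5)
The paper itself offers no proof of Theorem \ref{thm_LP}: it simply cites Theorem 2.8 of \cite{BS}, whose argument is of the same semigroup Littlewood--Paley type you outline (reproducing formula from $\varphi+\tilde\varphi\equiv\Gamma(m)$, $L^2$ bound by spectral calculus, $L^p$ bound for the vertical square function by singular-integral technology adapted to Gaussian bounds, duality for the converse). So your overall architecture is the expected one; the problem is in the step you yourself single out as the main obstacle.

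The genuine gap is your proposed remedy for the Calder\'on--Zygmund regularity of the $L^2((0,1),\frac{du}{u})$-valued kernel. Under the standing hypotheses only \eqref{due}, hence \eqref{UE} and \eqref{UEm}, are available, and these give \emph{no} pointwise regularity of $y\mapsto p_{m,u}(x,y)$; analyticity of the semigroup controls derivatives in the \emph{time} variable $u$ only. The mean value theorem in $u$ therefore cannot produce the H\"older-type estimate in the second spatial variable that the classical (vector-valued) Calder\'on--Zygmund theorem requires --- that estimate is simply not a consequence of the assumptions, and in the abstract setting of the paper it may fail. The standard way out is not kernel smoothness but the Duong--McIntosh/Blunck--Kunstmann criterion for operators with non-smooth kernels: one runs the Calder\'on--Zygmund decomposition with the semigroup itself playing the role of the approximation of the identity, and verifies an integrated H\"ormander-type condition for compositions such as $\psi_m(uH)(I-e^{-r^2H})^{N}$ or $\psi_m(uH)e^{-r^2H}$, which \emph{does} follow from Gaussian bounds; this yields the weak $(1,1)$ and hence $L^p$, $1<p<\infty$, bounds for $S$. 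With that replacement (or by invoking the known square-function estimates for semigroups with Gaussian upper bounds, which is in effect what \cite{BS} does), your plan goes through.

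A secondary bookkeeping point: the pairing you display, $\langle f,g\rangle=c_m\langle\varphi(H)f,g\rangle+c_m\int_0^1\langle\psi_m(uH)f,\psi_{\tilde m}(uH)g\rangle\frac{du}{u}$, is not what your reproducing formula gives; $\tilde\varphi(H)$ produces $\int_0^1\langle\psi_m(uH)f,g\rangle\frac{du}{u}$ with nothing on $g$. To insert the dual square function you must start from a reproducing formula built on $\psi_{m+\tilde m}$ and use the factorization $\psi_{m+\tilde m}(2u\lambda)=2^{m+\tilde m}\psi_m(u\lambda)\psi_{\tilde m}(u\lambda)$, which rescales $u$ and changes $\varphi$ into another finite combination of $\psi_k(H)$'s (still $L^p$-bounded, so harmless) --- routine, but it must be said; also note that this duality step proves $\|f\|_{L^p}\lesssim\|\varphi(H)f\|_{L^p}+\|Sf\|_{L^p}$, not the inequality you label it with. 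The final Minkowski argument for $q\ge2$ is correct.
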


Such a result can be seen as a semigroup version of the Littlewood-Paley
characterization of Lebesgue spaces. A proof of this theorem can be found in \cite{BS} (look for Theorem 2.8 in \cite{BS}).

\subsection{Hardy and BMO spaces}

We now define atomic Hardy spaces adapted to our situation (dictated by a semigroup) using the construction introduced in \cite{BZ}. Again we sum up the definitions and properties we need without proofs.
A more detailed explanation with proofs is provided in \cite{BS}.
%

Let $M$ be a large enough integer.

\begin{definition}\label{def_atome}
 A function $a \in L^1_{loc}$ is an atom associated with the ball $Q$ of radius $r$ if there exists
a function $f_Q$ whose support is included in $Q$ such that $a=(1-e^{-r^2H})^M(f_Q)$, with 
 $$\|f_Q\|_{L^{2}(Q)} \leq ( \mu(Q) )^{-\frac{1}{2}}.$$
\end{definition}
That last condition allows us to normalize $f_Q$ in $L^1$. Indeed by the
Cauchy-Schwarz inequality $$\|f_Q\|_{L^1}\leq \|f_Q\|_{L^2(Q)} \mu(Q)^{\frac 12}
\leq 1.$$
Moreover, $(1-e^{-r^2H})^M$ is bounded on $L^{1}$ so every atom is in $L^{1}$ and they are also
normalized in $L^1$: 
\begin{equation}\label{thm_atome_L1} \sup_{a} \|a\|_{L^1} \lesssim 1, 
\end{equation}
where we take the supremum over all the atoms. 

We may now define the Hardy space by atomic decomposition

\begin{definition}\label{def_hardy}
 A measurable function $h$ belongs to the atomic Hardy space $H^1_{ato}$, which will
be denoted $H^1$, if there exists a decomposition $$h=\sum_{i \in \N} \lambda_i a_i
\quad \mu- \textrm{a.e.}$$
 where $a_i$ are atoms and $\lambda_i$ real numbers satisfying $$\sum_{i \in \N}
|\lambda_i| < + \infty.$$
 We equip the space $H^1$ with the norm $$\|h\|_{H^1}:= \inf \limits_{h=\sum_i
\lambda_i a_i}\sum_{i\in \N}|\lambda_i|,$$
where we take the infimum over all the atomic decompositions.
\end{definition}

For a more general definition and some properties about atomic spaces we refer to
\cite{Ber,BZ}, and the references therein.  From \eqref{thm_atome_L1}, we deduce
\begin{cor} \label{injH1L1} The Hardy space is continuously embedded into $L^1$:
$$ \|f\|_{L^1} \lesssim  \|f\|_{H^1}.$$
From \cite[Corollary 7.2]{BZ}, the Hardy space $H^1$ is also a Banach space.
\end{cor}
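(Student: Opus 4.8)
The plan is to reduce the statement directly to the uniform $L^1$-normalization of atoms recorded in \eqref{thm_atome_L1}. Let $h \in H^1$ and fix an arbitrary atomic decomposition $h = \sum_{i \in \N} \lambda_i a_i$ (convergence $\mu$-a.e.) with $\sum_{i} |\lambda_i| < +\infty$, as in Definition \ref{def_hardy}. By \eqref{thm_atome_L1} there is a constant $C$ with $\|a_i\|_{L^1} \le C$ for every $i$, so
$$\sum_{i \in \N} \|\lambda_i a_i\|_{L^1} \le C \sum_{i \in \N} |\lambda_i| < +\infty,$$
which means the series $\sum_i \lambda_i a_i$ converges absolutely, hence converges, in the Banach space $L^1$ to some $g \in L^1$ with $\|g\|_{L^1} \le C \sum_i |\lambda_i|$.

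The only point requiring a word of justification is that $g = h$ $\mu$-a.e. First I would invoke the standard fact that a series converging in $L^1$ has a subsequence of partial sums converging $\mu$-a.e.\ to the same limit; since the full sequence of partial sums converges $\mu$-a.e.\ to $h$ by hypothesis, the two limits agree and $h = g \in L^1$ with $\|h\|_{L^1} \le C \sum_i |\lambda_i|$. Taking the infimum over all atomic decompositions of $h$ then yields
$$\|h\|_{L^1} \le C \, \|h\|_{H^1},$$
which is the claimed continuous embedding $H^1 \hookrightarrow L^1$. The assertion that $H^1$ is itself a Banach space is not reproved here; it is quoted from \cite[Corollary 7.2]{BZ}.

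There is essentially no obstacle in this argument: the content has been pushed entirely into \eqref{thm_atome_L1} (itself a consequence of the $L^1$-boundedness of $(1-e^{-r^2H})^M$ together with the Cauchy–Schwarz normalization in Definition \ref{def_atome}), and the remaining steps are soft functional-analytic manipulations. The mildest subtlety, worth stating cleanly, is the compatibility between the $\mu$-a.e.\ convergence used to \emph{define} membership in $H^1$ and the $L^1$-convergence we extract from the atomic bound; this is handled by passing to an a.e.-convergent subsequence of partial sums as above.
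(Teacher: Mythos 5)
Your argument is correct and is exactly the route the paper takes: the embedding is deduced directly from the uniform atom bound \eqref{thm_atome_L1} by summing the atomic decomposition in $L^1$ and taking the infimum over decompositions, with the a.e./$L^1$ limit identification being the only (minor) point to check. Nothing further is needed, and the Banach space property is indeed simply quoted from \cite[Corollary 7.2]{BZ}.
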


We refer the reader to \cite[Section 8]{BZ}, for details about the problem of
identifying the dual space $(H^1)^*$ with a $\BMO$ space. For a $L^\infty$-function,
we may define the $\BMO$ norm
$$ \|f\|_{\BMO} := \sup_{Q} \left(\aver{Q} |(1-e^{-r^2H})^M(f)|^2 \, d\mu \right)^{1/2},$$
where the supremum is taken over all the balls $Q$ of radius $r>0$. If $f\in L^\infty$ then $(1-e^{-r^2H})^M(f)$ is
also uniformly bounded (with respect to the ball $Q$), since the heat semigroup is
uniformly bounded in $L^\infty$ (see Proposition \ref{prop_continuite_semigroupe})
and so $\|f\|_{\BMO}$ is finite.

\begin{definition} The functional space $\BMO$ is defined as the closure 
$$\BMO:= \overline{\left\{ f\in L^\infty + L^2,\ \|f\|_{\BMO}<\infty \right\} }$$
for the $\BMO$ norm.
\end{definition}

Following \cite[Section 8]{BZ}, it comes that $\BMO$ is continuously embedded into
the dual space $(H^1)^*$ and contains $L^\infty$: 
$$ L^\infty \hookrightarrow \BMO \hookrightarrow (H^1)^*.$$
Hence
\begin{equation}\label{H1*doncBMO}
 \|T\|_{H^1 \to (H^1)^*} \lesssim \|T\|_{H^1 \to \BMO},
\end{equation}
and we have the following interpolation result: 
\begin{equation}\label{H1*doncBMO2}
 \forall \theta \in (0,1), \quad (L^2,\BMO)_{\theta} \hookrightarrow (L^2, (H^1)^*)_{\theta}.
\end{equation}

The following interpolation theorem between Hardy spaces and Lebesgue spaces is essential in our study.
\begin{thm}\label{interpolation}
For all $\theta \in (0,1)$, consider the exponent $p\in(1,2)$ and
$q=p'\in(2,\infty)$ given by
$$\frac 1p=\frac{1-\theta}{2}+\theta \quad \textrm{ and }\quad \frac{1}{q} =
\frac{1-\theta}{2}.$$
Then (using the interpolation notations), we have
$$(L^2, H^1)_{\theta}=L^p \quad \textrm{and} \quad (L^2, (H^1)^*)_{\theta}
\hookrightarrow L^{q},$$
if the ambient space $X$ is non-bounded and
$$ L^p \hookrightarrow L^2+(L^2, H^1)_{\theta} \quad \textrm{and} \quad L^2 \cap
(L^2, (H^1)^*)_{\theta}  \hookrightarrow L^{q},$$
if the space $X$ is bounded.

The same results hold replacing $(H^1)^*$ by $\BMO$ thanks to \eqref{H1*doncBMO2}.
\end{thm}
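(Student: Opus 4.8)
The plan is to split Theorem \ref{interpolation} into (i) the identification $(L^2,H^1)_\theta=L^p$ (together with its weaker bounded-space substitute) and (ii) the statements involving $(H^1)^*$ and $\BMO$, which I would derive from (i) by bilinear interpolation. Throughout, $(\cdot,\cdot)_\theta$ is handled by the $K$-method of real interpolation --- the complex method would do equally well on the soft parts --- and the exponents $\frac1p=\frac{1-\theta}{2}+\theta$, $\frac1q=\frac{1-\theta}{2}$ are exactly those of the classical identities $(L^2,L^1)_\theta=L^p$ and $(L^2,L^\infty)_\theta=L^q$.

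One inclusion in (i) is soft: by Corollary \ref{injH1L1} we have $H^1\hookrightarrow L^1$, and trivially $L^2\hookrightarrow L^2$, so by exactness (monotonicity of the interpolation functor under continuous inclusions of the endpoint spaces) $(L^2,H^1)_\theta\hookrightarrow(L^2,L^1)_\theta=L^p$. This holds whether or not $X$ is bounded.

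The substantial inclusion is $L^p\hookrightarrow(L^2,H^1)_\theta$ for $X$ non-bounded, equivalently a bound on $K(t,f;L^2,H^1)$ of the same order as the classical $K(t,f;L^2,L^1)$, with constant $\lesssim\|f\|_{L^p}$. I would obtain it from a Calderón--Zygmund decomposition adapted to the heat semigroup: given $f\in L^p$ and a height $\lambda>0$, take a Whitney covering $\{Q_i\}$ of a sublevel set of a suitable maximal function (with bounded overlap of fixed dilates) and a subordinate partition of unity $\{\eta_i\}$, and write $f=g^\lambda+\sum_i\lambda_i a_i$, where each $a_i$ is \emph{forced} into the shape of Definition \ref{def_atome} by taking it proportional to $(1-e^{-r_i^2 H})^M$ applied to a localised remainder, with $g^\lambda$ absorbing the rest. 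One then checks $g^\lambda\in L^2$ with $\|g^\lambda\|_{L^2}\lesssim\lambda^{1-p/2}\|f\|_{L^p}^{p/2}$ and $\sum_i|\lambda_i|\lesssim\lambda^{-(p-1)}\|f\|_{L^p}^{p}$, using the uniform $L^p$-boundedness of $e^{-tH}$ and $\psi_m(tH)$ (Proposition \ref{prop_continuite_semigroupe}) and the off-diagonal/Gaussian bounds \eqref{davies_gaffney_estimates}, \eqref{UE} to localise these operators to the balls $Q_i$. Feeding $K(t,f;L^2,H^1)\le\|g^\lambda\|_{L^2}+t\,\big\|\sum_i\lambda_i a_i\big\|_{H^1}$ with $\lambda=\lambda(t)$ chosen to balance the two terms into the layer-cake formula for $\|f\|_{L^p}$ finishes the argument. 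This Calderón--Zygmund step --- making the bad part genuinely atomic \emph{in the semigroup sense} while retaining quantitative control --- is where I expect the main difficulty to lie; all the rest is soft. (In fact this interpolation identity is already established for these semigroup-adapted Hardy/$\BMO$ spaces in \cite{BZ} and recalled in \cite{BS}, so one may simply invoke it.)

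For (ii) I would avoid the abstract interpolation duality theorem (and its density hypotheses) in favour of bilinear interpolation. The duality pairing is bounded $L^2\times L^2\to\C$ and $H^1\times(H^1)^*\to\C$, hence bounded $(L^2,H^1)_\theta\times(L^2,(H^1)^*)_\theta\to\C$; since $(L^2,H^1)_\theta=L^p$, this says precisely that every $g\in(L^2,(H^1)^*)_\theta$ obeys $|\langle f,g\rangle|\lesssim\|f\|_{L^p}\,\|g\|_{(L^2,(H^1)^*)_\theta}$ for $f$ in a dense subspace of $L^p$, i.e. $g\in(L^p)^*=L^q$ with norm control. The $\BMO$ version follows at once: $\BMO\hookrightarrow(H^1)^*$ gives $(L^2,\BMO)_\theta\hookrightarrow(L^2,(H^1)^*)_\theta$ by monotonicity, which is exactly \eqref{H1*doncBMO2}. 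Finally, the bounded case ($\mu(X)<\infty$ by \eqref{ah}) is treated by the same two arguments after peeling off the part of $f$, resp.\ of the functional, sitting in $L^\infty\hookrightarrow L^2$; this only affects bookkeeping and degrades the equality to the one-sided inclusions $L^p\hookrightarrow L^2+(L^2,H^1)_\theta$ and $L^2\cap(L^2,(H^1)^*)_\theta\hookrightarrow L^q$ recorded in the statement.
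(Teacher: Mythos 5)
The paper does not actually prove this statement: it is recalled, without proof, from \cite{BS} (the bounded-space version being Theorem 2.17 there, as the paper's own remark notes), and the argument in those references rests on the abstract atomic-space interpolation theory of \cite{BZ}. So your closing parenthetical --- that one may simply invoke the references --- is exactly the route the paper takes, and your sketch should be read as a reconstruction of the proof living in \cite{BZ,BS} rather than as an alternative to anything proved here. As such it has the right architecture: the soft embedding $(L^2,H^1)_\theta\hookrightarrow (L^2,L^1)_\theta=L^p$ from Corollary \ref{injH1L1}, a Calder\'on--Zygmund decomposition whose bad part is forced into the atomic shape of Definition \ref{def_atome} for the hard embedding, duality for the $(H^1)^*$ side, and monotonicity via \eqref{H1*doncBMO2} for the $\BMO$ variant (the last point is literally how the paper phrases it). Two caveats you would have to face if you carried the sketch out, though they are not gaps relative to the paper since it proves nothing here: (i) the CZ step is where all the work is, and it is more delicate than your description suggests, because the atoms are $L^2$-normalized ($\|f_Q\|_{L^2(Q)}\leq\mu(Q)^{-1/2}$) while $f$ is only in $L^p$ with $p<2$, so a stopping based on an $L^2$-averaged maximal function is not directly available and one must follow the finer construction of \cite{BZ}; (ii) in the real method both the identity $(L^2,L^1)_\theta=L^p$ and the Lions--Peetre bilinear step pairing $(L^2,H^1)_\theta$ against $(L^2,(H^1)^*)_\theta$ into $\C$ require a compatible choice of second interpolation indices (roughly, conjugate ones), on which the paper is silent and so are you; likewise the bounded case needs the threshold-radius modification made explicit in \cite{BS} rather than just ``peeling off an $L^\infty$ part''.
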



\begin{rem}
 We will not mention the case of a bounded space $X$ in the proofs since interpolation is more delicate in that case. One can find the corresponding interpolation theorem (Theorem 2.17 in \cite{BS}) and check that the results apply in that case.
\end{rem}

\subsection{Motivation of the hypothesis}\label{subsection_motivations}

This section is dedicated to the motivation of Assumption \ref{assumption_cos}.

As we said in the Introduction, this hypothesis is weaker than the one in \cite{BS}, namely
\begin{equation}\label{assumption_cos_BS}
 \|\cos(s\sqrt H) \psi_m(r^2H)\|_{L^2(B) \to L^2(\widetilde B)} \lesssim \left( \frac{r}{r+s} \right)^{\frac{d-1}{2}} \left( \frac{r}{r+ |s-L|} \right)^{\frac{d+1}{2}},
\end{equation}
where $L=d(B, \widetilde B)$.
Therefore in all the situations where \eqref{assumption_cos_BS} is satisfied, we can assure that Assumption \ref{assumption_cos} is valid. 
When we have a good knowledge of the wave propagator, we can also affirm that Assumption \ref{assumption_cos} holds. This is the case thanks to a parametrix in \cite{Berard} in the following cases: 
\begin{itemize}
 \item The Euclidean spaces $\mathbb R^d$ with the usual Laplacian $H=-\Delta=-\sum_{j=1}^d \partial^2_j$;
 \item A compact Riemannian manifold of dimension $d$ with the Laplace-Beltrami operator;
 \item A smooth non-compact Riemannian $d$-manifold with $C^\infty_b$-geometry and Laplace-Beltrami operator;
 \item The Euclidean space $\mathbb R^d$ equipped with the measure $d\mu=\rho dx$ and $H = -\frac{1}{\rho} \textrm{div}(A\nabla)$, where $\rho$ is an uniformly non-degenerate function and $A$ a matrix with bounded derivatives;
\end{itemize}

Moreover we can check that for a non-trapping asymptotically conic manifold with $H=-\Delta + V$ the assumption holds. Therefore we recover, with another proof, the result of \cite{HZ}.

However in \cite{ILP} the authors proved that for the Laplacian inside a convex domain of dimension $d\geq 2$ in $\mathbb R^d$, there was a loss of $s^{\frac 14}$ in the dispersion, namely
\begin{equation}\label{disp_ILP}
\|\cos(s\sqrt H) \psi_m(r^2H)\|_{L^2(B) \to L^2(\widetilde B)} \lesssim \left( \frac{r}{r+s} \right)^{\frac{d-1}{2}+\frac 14} \left( \frac{r}{r+ |s-L|} \right)^{\frac{d+1}{2}}.
\end{equation}
This loss indicates a difficulty when dealing with boundaries of a domain. 
The authors used oscillatory integrals techniques and a careful study of the reflections on the boundary of the domain. 

A remark of J.-M. Bouclet to get around the use of a parametrix leads us to investigate Klainerman's commuting vectorfields method. It can be found in detail in \cite{Kla} and \cite{Sogge}. 
Briefly, if one can find enough vectorfields commuting with the wave operator, using a version of Sobolev inequalities, also know as Klainerman-Sobolev inequalities, one can obtain dispersion estimations for the wave propagator. 
In our setting, we would obtain (see \cite[Remark 1.4]{Sogge}) the following dispersion property: 
\begin{equation}\label{disp_kla}
\|\cos(s\sqrt H) \psi_m(r^2H)\|_{L^2(B) \to L^2(\widetilde B)} \lesssim \left( \frac{r}{r+s} \right)^{\frac{d-1}{2}} \left( \frac{r}{r+ |s-L|} \right)^{\frac 12}. 
\end{equation}
It is very close to our Assumption \ref{assumption_cos}, but it takes into account the dispersion inside the light cone. In that sense, it is intermediate between our Assumption \ref{assumption_cos} and estimate \eqref{assumption_cos_BS}. 
A question we would like to pursue investigating is to find enough well-suited vectorfields to apply this method in generals settings.
The framework inwhich one is interested in verifying \eqref{disp_kla} is when $H$ is a given by divergence form, namely $H=-\textrm{div}(A\nabla)$. 
When $A=\textrm{I}_d$ the identity matrix of size $d$, $H$ is the usual Laplacian. In this case and the one where $A$ has $C^{1,1}$ coefficients, Klainerman obtained in \cite{Kla} a dispersion property of the form \eqref{disp_kla}. 
It is not new since it was already proven in \cite{Smith}. But the novelty in \cite{Kla} is to get around the use of a parametrix.



\section{Proofs of the Theorems}\label{section_proofs}

This section is dedicated to the proofs of the announced result. It is divided into two main theorems. The first one shows which $L^2-L^2$ dispersion property we can recover 
thanks to Assumption \ref{assumption_cos}. In the second theorem we obtain Strichartz inequalities using such dispersive estimates. 
We recall that our goal is to investigate which properties (in terms of Strichartz inequalities) for the Schr\"odinger operator can be deduced from a weak assumption on the wave operator. 

\subsection{Dispersive estimates for the Schr\"odinger operator}

The main theorem of this section is the following
\begin{thm}\label{demo_hyp_Hmn}
 Assume $d>2$, $m \geq \lceil \frac{d}{2} \rceil$, and that Assumption \ref{assumption_cos} is satisfied, then for
 all balls $B$, $\widetilde B$ of radius $r>0$ and all $0<h\leq1$
 \begin{equation}\label{hyp_1}
   \|e^{itH} \psi_{m'}(h^2H) \psi_m(r^2H)\|_{L^2(B) \to L^2(\widetilde{B})} \lesssim \frac{r^d}{|t|^{\frac{d-2}{2}} h^{2}},
 \end{equation}
 for all $m' \geq 0$ and where $0<|t|\leq 1$ if $\kappa = +\infty$ and $h^2 \leq |t| \leq h^{1+\varepsilon}$ if $\kappa < +\infty$.
\end{thm}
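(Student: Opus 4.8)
## Proof Plan

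The plan is to express the Schrödinger propagator $e^{itH}$ (localized in frequency) as a superposition of wave propagators $\cos(s\sqrt H)$ via a subordination-type formula, and then to integrate the uniform dispersive bound from Assumption \ref{assumption_cos} against the resulting kernel. Since we work with the localization $\psi_{m'}(h^2H)\psi_m(r^2H)$, the relevant frequencies are essentially concentrated at scale $\xi \sim 1/r$ (from $\psi_m(r^2H)$) and cut off above scale $1/h$ (from $\psi_{m'}(h^2H)$); the point of the theorem is that $h\le 1$ controls the high-frequency part. Concretely, I would write, for a suitable even function $g$ with $\hat g$ supported appropriately,
\begin{equation*}
  e^{it\lambda^2}\,\chi(\lambda) = \int_{\mathbb R} a_t(s)\cos(s\lambda)\,ds,
\end{equation*}
where $\chi$ encodes the product of the two $\psi$'s evaluated at $\lambda^2$ (so $\chi$ is a Schwartz bump at scale $1/r$, cut at $1/h$), and $a_t$ is the (essentially explicit) Fourier transform of $\lambda \mapsto e^{it\lambda^2}\chi(\lambda)$. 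Plugging $\lambda = \sqrt H$ and applying the triangle inequality in $L^2(B)\to L^2(\widetilde B)$ operator norm gives
\begin{equation*}
  \|e^{itH}\psi_{m'}(h^2H)\psi_m(r^2H)\|_{L^2(B)\to L^2(\widetilde B)}
   \le \int_{\mathbb R} |a_t(s)|\,\|\cos(s\sqrt H)\widetilde\psi_m(r^2H)\|_{L^2(B)\to L^2(\widetilde B)}\,ds,
\end{equation*}
after factoring $\psi_m(r^2H)$ appropriately so that the remaining symbol can absorb $\psi_{m'}(h^2H)$ and the difference between $\psi_m$ and the $\widetilde\psi_m$ needed to feed into Assumption \ref{assumption_cos}.

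The second step is to estimate the two regimes of $s$ separately. For $|s|\ge r$ we use the bound $(r/(r+s))^{(d-1)/2}\lesssim (r/s)^{(d-1)/2}$ from \eqref{hypothesis}; for $|s|\le r$ we use simply $L^2$ functional calculus (Theorem \ref{thm_calcul_fonctionnel}), since $(r/(r+s))^{(d-1)/2}\simeq 1$ there, giving a uniformly bounded operator norm $\lesssim 1$. So the whole quantity is controlled by
\begin{equation*}
  \int_{|s|\le r}|a_t(s)|\,ds + \int_{|s|\ge r}|a_t(s)|\left(\frac r s\right)^{\frac{d-1}{2}}ds.
\end{equation*}
The crucial input is then a pointwise/weighted bound on $a_t$. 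By stationary phase (or by direct computation, since $e^{it\lambda^2}$ has an explicit Gaussian-type Fourier transform), $a_t(s)$ behaves like $|t|^{-1/2}$ times a function concentrated at $|s|\lesssim (\text{frequency scale})\cdot|t| \lesssim |t|/h$, with rapid decay beyond that, and with $\|a_t\|_{L^1}$ itself comparable to (frequency localization measure)$\times |t|^{-1/2}\times(\text{something})$; the $h^{-2}$ and the $r^d/|t|^{(d-2)/2}$ in \eqref{hyp_1} should emerge from carrying the $r$-scale of $\chi$ and the $h$-cutoff through these computations. I would in fact mirror the corresponding step in \cite{BS}: the key is that the condition $|t|\le 1$ (resp.\ $h^2\le|t|\le h^{1+\varepsilon}$ when $\kappa<\infty$) guarantees the support of $\hat g$ in the subordination stays within $(0,\kappa)$ where \eqref{hypothesis} is available, which is why the restriction on $|t|$ appears.

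The main obstacle I expect is the bookkeeping in the second step: one must track how the three scales $r$, $h$, $|t|$ enter the weighted $L^1$ norm of $a_t$ against the factor $(r/(r+s))^{(d-1)/2}$, and check that the exponent $d>2$ makes the integral $\int_{|s|\ge r}|s|^{-(d-1)/2}\cdot(\dots)\,ds$ converge and produces exactly the power $|t|^{-(d-2)/2}$ rather than something worse. A secondary subtlety is the algebra of the functional calculus: one must choose the auxiliary cutoff $\widetilde\psi_m$ (a function of the form $\mu^{m}e^{-c\mu}$ or a finite combination thereof, as in the remark about $\varphi$) so that $\psi_{m'}(h^2H)\psi_m(r^2H) = \widetilde\psi_m(r^2H)\cdot\theta_{t,h,r}(H)$ with $\theta_{t,h,r}(H) = e^{itH}\cdot(\text{remainder})$ having Fourier transform with the claimed size — this requires $m\ge\lceil d/2\rceil$ so that $\psi_m(r^2\lambda^2)$ decays fast enough at high frequency to make the subordination integral absolutely convergent and to generate the needed powers of $r$. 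Once $a_t$ is understood, the restriction $h\le 1$ together with $|t|\ge h^2$ is what prevents the $h^{-2}$ from blowing up relative to $|t|^{-(d-2)/2}$, and the proof concludes by combining the two regime estimates.
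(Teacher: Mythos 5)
Your overall skeleton --- writing the frequency--localized Schr\"odinger propagator as a superposition of wave propagators and integrating Assumption \ref{assumption_cos} against the weight --- is indeed the paper's route: the paper uses Hadamard's transmutation formula \eqref{hadamard} with $z=h^2-i|t|$, so the weight is the explicit Gaussian $e^{-s^2/4z}/\sqrt{\pi z}$. But there is a genuine gap at the quantitative heart of your plan. The profile you assert for $a_t$ (size $|t|^{-1/2}$, concentrated on $|s|\lesssim |t|/h$), fed into your two--regime triangle inequality (trivial bound for $|s|\le r$, the factor $(r/s)^{(d-1)/2}$ for $|s|\ge r$), does not give \eqref{hyp_1}. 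Already the region $|s|\le r$ contributes about $r\,|t|^{-1/2}$, and for $d>3$ this exceeds the target $r^d|t|^{-(d-2)/2}h^{-2}$ by an unbounded factor: take $d=4$, $r=h$, $|t|=h^{1+\varepsilon}$ (admissible parameters); then $r|t|^{-1/2}=h^{(1-\varepsilon)/2}$ while the target is $h^{1-\varepsilon}$, and the ratio blows up as $h\to0$ (the range $r\le |s|\le |t|/r$ produces the same loss). The missing idea is the smallness of the subordination weight for $s$ small compared with $|t|/r$: because the spectral cutoff vanishes to high order at $\lambda=0$ (this, not high-frequency decay, is where $m$ matters), stationary phase gives roughly $|a_t(s)|\simeq |t|^{-1/2}\,|\chi(s/2t)|\lesssim |t|^{-1/2}\left(\tfrac{r|s|}{|t|}\right)^{2m}$ on $|s|\lesssim |t|/r$, and only with this vanishing does the integral against $(r/s)^{(d-1)/2}$ localize at $s\simeq |t|/r$ and yield $\left(\tfrac{r^2}{|t|}\right)^{(d-2)/2}\le \tfrac{r^d}{|t|^{(d-2)/2}h^2}$. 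In the paper this is exactly what the $2n$ integrations by parts in $s$ accomplish, against a cutoff $\chi(s)$ supported in $[0,2|t|/r]$ and absorbing the resulting powers of $H^{-1}$ into $\psi_m(r^2H)$; note also that the relevant splitting scale is $|t|/r$, not $r$.

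Two further points. First, your explanation of the restriction on $|t|$ is off: it is not about keeping the support of the weight inside $(0,\kappa)$ (the weight is not compactly supported). In the paper the tail $s>\kappa$ (present only when $\kappa<\infty$) is controlled by the Gaussian decay $|e^{-s^2/4z}|\le e^{-cs^2h^2/t^2}$, giving a contribution of size $(t^2/h^2)^N\sqrt{|t|}/h$, and it is precisely and only there that $|t|\le h^{1+\varepsilon}$ is used to absorb this into $r^d|t|^{-(d-2)/2}h^{-2}$; this is also why no such condition is needed when $\kappa=\infty$. Second, you would still need the routine reductions the paper performs ($m'=0$, $h\le r$, $r^2\le |t|$) and the separate treatment of the middle range for $2<d\le 3$, but these are minor once the vanishing of the weight near $s=0$ is established and exploited.
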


This shows how dispersion for the wave propagator implies dispersion for the Schr\"odinger group.
The main tool to link those two operators is Hadamard's transmutation formula
\begin{equation} \label{hadamard}
 \forall z \in \mathbb C,\ \Re(z)>0,\ e^{-zH} = \int_0^{+\infty} \cos(s \sqrt H) e^{- \frac{s^2}{4z}} \frac{ds}{\sqrt{\pi z}}.
\end{equation}

\begin{proof}
 Let $B$, $\widetilde B$ be balls with radius $r > 0$. 
 We start our proof with some easy reductions.
 
 First remark that we can restrict ourselves to prove the theorem for $h \leq r$. Indeed if the theorem is true for $h \leq r$ then  for all $h >r$
 \begin{align*}
  \|e^{itH} \psi_{m'}(h^2H) \psi_m(r^2H)\|_{L^2(B) \to L^2(\widetilde{B})} &= \frac{r^{2m}}{(\frac{h^2}{2}+r^2)^m} \|e^{itH} \psi_{m'}(\frac{h^2}{2}H) \psi_{m'}((\frac{h^2}{2}+r^2)H)\|_{L^2(B) \to L^2(\widetilde{B})} \\
  &\lesssim \left( \frac rh \right)^{2m} \|e^{itH} \psi_{m'}(\frac{h^2}{2}H) \psi_{m'}((\frac{h^2}{2}+r^2)H)\|_{L^2(B_{\rho}) \to L^2(\widetilde{B}_{\rho})}
 \end{align*}
 where $\rho = \frac{h^2}{2}+r^2 > r$, $B_{\rho} = \frac{\rho}{r}B$ and $\widetilde B_{\rho} = \frac{\rho}{r}\widetilde B$ are of radius $\rho$. Since $\frac{h^2}{2}+r^2 \geq \frac{h^2}{2}$ we then obtain
 $$\|e^{itH} \psi_{m'}(h^2H) \psi_m(r^2H)\|_{L^2(B) \to L^2(\widetilde{B})} \lesssim \left( \frac rh \right)^{2m} \frac{\rho^d}{|t|^{\frac{d-2}{2}}h^2}.$$
 We conclude using $\rho \lesssim h$ and $$\frac{r^{2m}h^d}{h^{2m}} =r^d \left(\frac{r}{h}\right)^{2m-d}\leq r^d.$$
 
 Moreover we only need to prove the theorem for $m'=0$. Indeed if we show
 $$ \|e^{itH} \psi_{0}(h^2H) \psi_m(r^2H)\|_{L^2(B) \to L^2(\widetilde{B})} \lesssim \frac{r^d}{|t|^{\frac{d-2}{2}} h^{2}}$$
 then for all $m' \geq 0$ we have
 \begin{align*}
   \|e^{itH} \psi_{m'}(h^2H) \psi_m(r^2H)\|_{L^2(B) \to L^2(\widetilde{B})} &= \left( \frac hr \right)^{2m'} \|e^{itH} \psi_{0}(h^2H) \psi_{m+m'}(r^2H)\|_{L^2(B) \to L^2(\widetilde{B})} \\
   &\lesssim \frac{r^d}{|t|^{\frac{d-2}{2}} h^{2}}
 \end{align*}
 since $h\leq r$

 Finally it is sufficient to consider $r^2\leq t$ because if $r^2>t$ then by bounded functional calculus we have
 $$\|e^{itH} \psi_{m'}(h^2H) \psi_m(r^2H)\|_{L^2(B) \to L^2(\widetilde{B})} \lesssim 1 \leq \left(\frac{r^2}{|t|}\right)^{\frac{d-2}{2}} \leq \frac{r^d}{|t|^{\frac{d-2}{2}}h^2}.$$
 
 In summary, we fix $h\leq r$, $m'=0$, and $r^2\leq t$.
 
 In order to avoid nonzero bracket terms in the forthcoming integrations by parts, we introduce a technical function $\chi \in C^{\infty}(\mathbb R_+)$ such that 
 $$\begin{cases}
 &0\leq \chi \leq 1\\
 &\chi(x)=1 \text{ if } x \in [0, \frac{|t|}{r}]\\
 &\chi(x)=0 \text{ if } x \in [2\frac{|t|}{r}, +\infty]
 \end{cases}.$$ 
 Moreover we have $\forall n \in \mathbb N$, $\forall x \in \mathbb R_+$, $|\chi^{(n)}(x)| \lesssim \left( \frac{r}{|t|} \right)^n$.
 Thus we split \eqref{hadamard} into 
 \begin{equation}\label{hadamard_split}
 e^{-zH} = \int_0^{+\infty} \chi(s)\cos(s \sqrt H) e^{- \frac{s^2}{4z}} \frac{ds}{\sqrt{\pi z}} + \int_0^{+\infty} (1-\chi(s))\cos(s \sqrt H) e^{- \frac{s^2}{4z}} \frac{ds}{\sqrt{\pi z}}.
 \end{equation}
 
 We treat the first term by integrations by parts. Making $2n$ integration by parts (with $n$ to be determined later) we get
\begin{align*}
 &\int_0^{\infty}\cos(s \sqrt H) \psi_m(r^2H) \chi(s) e^{-\frac{s^2}{4z}} ds = \\
 \int_0^{\infty}&\cos(s \sqrt H) r^{2n} \psi_{m-n}(r^2H) \sum_{k=0}^{2n} \chi^{(2n-k)}(s) e^{-\frac{s^2}{4z}}\left(c_k \frac{s^k}{z^k} + \ldots + c_{n-2\lfloor \frac n2 \rfloor} \frac{s^{k-2\lfloor \frac k2 \rfloor}}{z^{k-\lfloor \frac k2 \rfloor}} \right) ds,
\end{align*}
with $(c_i)_i$ being numerical constants playing no significant role.
Keeping the extremal terms (one when $k=0$ and two when $k=2n$) we have to estimate
$$\int_0^{2\frac{|t|}{r}} \| \cos(s\sqrt H) \psi_{m-n}(r^2H) \|_{L^2(B) \to L^2(\widetilde B)} r^{2n} \left( \left( \frac{r}{|t|} \right)^{2n} + \frac{s^{2n}}{|t|^{2n}} + \frac{1}{|t|^n} \right) \frac{ds}{\sqrt{|t|}}.$$
By continuity of our operators $$\| \cos(s\sqrt H) \psi_{m-n}(r^2H) \|_{L^2(B) \to L^2(\widetilde B)} \lesssim 1,$$ we can estimate
$$\int_0^{2\frac{|t|}{r}} \left( \frac{r^2}{|t|} \right)^{2n} \frac{ds}{\sqrt{|t|}} \lesssim \left( \frac{r^2}{|t|} \right)^{2n-\frac 12} \quad \textrm{and} \quad \int_0^{2\frac{|t|}{r}} \frac{r^{2n}}{|t|^{n}} \frac{ds}{\sqrt{|t|}} \lesssim \left( \frac{r^2}{|t|} \right)^{n-\frac 12}.$$
Using \eqref{assumption_cos} we have
$$\int_0^{2\frac{|t|}{r}} \left(\frac{r}{r+s}\right)^{\frac{d-1}{2}} \left( \frac{rs}{|t|} \right)^{2n} \frac{ds}{\sqrt{|t|}} \leq \int_0^{2\frac{|t|}{r}} \frac{r^{\frac{d-1}{2} + 2n}}{|t|^{2n+\frac 12}} s^{2n-\frac{d-1}{2}} ds \simeq \left( \frac{r^2}{|t|} \right)^{\frac{d-2}{2}}.$$
Thus, the intermediate terms having the same behaviour, for large enough $n$ $$\left \| \int_0^{\infty}\cos(s \sqrt H) \psi_m(r^2H) \chi(s) e^{-\frac{s^2}{4z}} \frac{ds}{\sqrt z} \right \|_{L^2(B) \to L^2(\widetilde B)} \lesssim \left( \frac{r^2}{|t|} \right)^{\frac{d-2}{2}}.$$
Moreover, since $h \leq r$ we have $$\left( \frac{r^2}{|t|} \right)^{\frac{d-2}{2}} \leq \frac{r^d}{|t|^{\frac{d-2}{2}}h^{2}}.$$

To estimate the second term in \eqref{hadamard_split}, we treat separately the cases $s< \kappa$ and $s > \kappa$.
$$\int_0^{+\infty} (1-\chi(s))\cos(s \sqrt H) e^{- \frac{s^2}{4z}} \frac{ds}{\sqrt{\pi z}} = \int_{\frac{|t|}{r}}^{\kappa} \cos(s \sqrt H) e^{- \frac{s^2}{4z}} \frac{ds}{\sqrt{\pi z}} + I_\kappa,$$
where $$I_\kappa = \left \{\begin{array}{ccc}
                    &0 \quad &\textrm{if} \quad \kappa = +\infty \\
                    &\int_\kappa^{+\infty} \cos(s \sqrt H) e^{- \frac{s^2}{4z}} \frac{ds}{\sqrt{\pi z}} \quad &\textrm{if} \quad \kappa <+\infty
                   \end{array}\right..$$
We use the exponential decay for $s > \kappa$. Noting $z=h^2-i|t|$, and using the $L^2$-boundedness of the $\cos(s \sqrt H) \psi_m(r^2H)$ operator: 
\begin{align*}
 \left \| \int_\kappa^{+\infty} \cos(s \sqrt H) \psi_m(r^2H) e^{-\frac{s^2}{4z}} \frac{ds}{\sqrt z} \right \|_{L^2(B) \to L^2(\widetilde B)} & \lesssim \int_\kappa^{+\infty} e^{-\frac{s^2}{8} \Re \frac 1z} e^{-\frac{s^2}{8} \Re \frac 1z} \frac{ds}{\sqrt{|z|}} \\
 & \lesssim \int_{\frac \kappa8 \sqrt{\Re \frac 1z}}^{+\infty}e^{-u^2}\frac{du}{\sqrt{|z| \Re \frac 1z}} e^{-\frac{\kappa^2\Re \frac 1z}{8}} \\
 & \lesssim \left(\int_0^{+\infty}e^{-u^2}du\right) \frac{\sqrt{|t|}}{h} e^{-\frac{\kappa^2 h^2}{16t^2}}\\
 & \lesssim \left(\frac{h^2}{t^2}\right)^{- N}\frac{\sqrt{|t|}}{h}
\end{align*}
for all $N\geq1$ as large as we want and where we used $|z| \simeq |t|$ and $\Re \frac 1z \geq \frac{h^2}{2t^2}$. 
Moreover $$ \frac{|t|^{2N+\frac12}}{h^{2N+1}} \leq \frac{h^d}{|t|^{\frac{d-2}{2}}h^{2}} \leq \frac{r^d}{|t|^{\frac{d-2}{2}}h^{2}}$$ as soon as
$|t|^{2N + \frac{d-2}{2} + \frac 12 } \leq h^{2N+d-1}$ that is $|t| \leq h^{1+ \frac{\frac{d-1}{2}}{2N+\frac{d-1}{2}}}$.
Which is true since $$|t|\leq h^{1+\varepsilon} \leq h^{1+ \frac{\frac{d-1}{2}}{2N+\frac{d-1}{2}}}$$ for $N$ large enough. 
\begin{rem}
We point out that this is the only moment we use that $|t| \leq h^{1+\varepsilon}$. 
That is why we do not need it when $\kappa = +\infty$ since this term does not step in. 
Therefore the loss of derivatives in Theorem \ref{thm_2} is better when $\kappa = + \infty$.
\end{rem}

We use Assumption \ref{assumption_cos} when $s < \kappa$. 
Indeed it yields
$$\left \| \int_{\frac{|t|}{r}}^{\kappa} \cos(s \sqrt H) \psi_m(r^2H) e^{-\frac{s^2}{4z}} \frac{ds}{\sqrt z} \right \|_{L^2(B) \to L^2(\widetilde B)} \lesssim \int_{\frac{|t|}{r}}^\kappa \left( \frac{r}{r+s} \right)^{\frac{d-1}{2}} e^{-\frac{s^2}{4} \Re \frac 1z} \frac{ds}{\sqrt{|t|}}.$$
When $\frac{d-1}{2} > 1$ (i.e. $d>3$) we have
\begin{align*}
 \int_{\frac{|t|}{r}}^\kappa \left( \frac{r}{r+s} \right)^{\frac{d-1}{2}} \frac{ds}{\sqrt{|t|}} &\leq \frac{r^{\frac{d-1}{2}}}{\sqrt{|t|}} \int_{\frac{|t|}{r}}^\infty s^{-\frac{d-1}{2}} ds \lesssim \frac{r^{\frac{d-1}{2}}}{\sqrt{|t|}} \left( \frac{|t|}{r} \right)^{-\frac{d-1}{2}+1} \\
 &\leq \frac{r^{d-2}}{|t|^{\frac{d-2}{2}}}  \leq \frac{r^d}{|t|^{\frac{d-2}{2}}h^{2}} h^2 \leq \frac{r^d}{|t|^{\frac{d-2}{2}}h^{2}}
\end{align*}
since $h^2 \leq 1$. \newline
When $\frac{d-1}{2} <1$ (i.e. $d<3$), since $\Re \frac 1z \gtrsim \frac{h^2}{t^2}$ we have
\begin{align*}
 \int_{\frac{|t|}{r}}^\kappa \left( \frac{r}{r+s} \right)^\frac{d-1}{2} e^{-c\frac{s^2h^2}{t^2}} \frac{ds}{\sqrt{|t|}} &\lesssim \frac{r^\frac{d-1}{2}}{\sqrt{|t|}} \int_{\frac hr}^\infty e^{-u^2} \left( \frac{|t|u}{h} \right)^{-\frac{d-1}{2}} \frac{|t|}{h} du \\
 & \lesssim \frac{r^{\frac{d-1}{2}} h^{\frac{d-3}{2}}}{|t|^{\frac{d-2}{2}}} \int_0^\infty u^{-\frac{d-1}{2}} e^{-u^2}du \\
 & \lesssim \frac{r^d}{|t|^{\frac{d-2}{2}}h^2}\frac{h^{\frac{d-3}{2}}h^2}{r^{\frac{d+1}{2}}} \leq \frac{r^d}{|t|^{\frac{d-2}{2}}h^2}
\end{align*}
since $h \leq r$. \newline 
When $\frac{d-1}{2}=1$ (i.e. $d=3$) we have 
\begin{align*}
 \int_{\frac{|t|}{r}}^\kappa \frac{r}{r+s}e^{-\frac{s^2}{4} \Re \frac 1z} \frac{ds}{\sqrt{|t|}} &\lesssim \frac{r}{\sqrt{|t|}} \int_{\frac hr}^{\kappa \frac{h}{|t|}} \left( \frac{|t|u}{h} \right)^{-1} e^{-u^2} \frac{|t|}{h} du \\
 &\leq \frac{r}{\sqrt{|t|}} e^{-\frac{h^2}{2r^2}}\frac{r}{h}\int_0^{+\infty} e^{-\frac{u^2}{2}} du \lesssim \frac{r^2}{\sqrt{|t|} h} \left(\frac hr \right)^{-1} = \frac{r^3}{\sqrt{|t|} h^2} = \frac{r^d}{|t|^{\frac{d-2}{2}} h^2}.
\end{align*}
In the end, summing all the parts up, we have $$\|e^{itH}\psi_{m'}(h^2H) \psi_m(r^2H)\|_{L^2(B) \to L^2(\widetilde B)} \lesssim \frac{r^d}{|t|^{\frac{d-2}{2}} h^2}.$$
 
\end{proof}

\subsection{Strichartz inequalities}

To obtain Strichartz estimates we are going to use Theorem 1.1 of \cite{BS}, which we recall here with a slight modification in assuming \eqref{ah}, namely

\begin{thm} \label{thm1_BS} Assume \eqref{ah} with \eqref{due}. 
Consider a self-adjoint and $L^2$-bounded operator $T$ (with $\|T\|_{L^2 \to L^2} \lesssim 1$), 
which commutes with $H$ and satisfies 
\begin{equation}\label{disp} \tag{$H_m(A)$}
 \|T \psi_{m}(r^2 H)\|_{L^2(B_r)\to L^2(\widetilde{B_r})} \lesssim A \mu(B_r)^{\frac 12} \mu(\widetilde{B_r})^{\frac12}
\end{equation}
for  some $m \geq \frac d2 $. 
Then $T$ is bounded from $H^1$ to $BMO$ and from $L^p$ to $L^{p'}$ for $p\in(1,2)$ with 
$$ \|T\|_{H^1 \to BMO} \lesssim A \qquad \textrm{and} \qquad \|T\|_{L^p \to L^{p'}} \lesssim A^{\frac{1}{p}-\frac{1}{p'}}$$
if the ambient space $X$ is unbounded and
$$ \|T\|_{H^1 \to BMO} \lesssim \max(A,1) \qquad \textrm{and} \qquad \|T\|_{L^p \to L^{p'}} \lesssim \max(A^{\frac{1}{p}-\frac{1}{p'}},B)$$ 
if the ambient space $X$ is bounded, and where, for the last inequality, we assumed that $\|T\|_{L^p \to L^2}\lesssim B$.
\end{thm}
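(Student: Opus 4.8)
The plan is to prove the $H^1\to BMO$ estimate first, by atomic decomposition, and then to deduce the $L^p\to L^{p'}$ bound by complex interpolation against the $L^2$ bound. By Definition \ref{def_hardy} it suffices to bound $\|Ta\|_{BMO}\lesssim A$ uniformly over atoms $a=(1-e^{-r^2H})^M f_Q$ with $\supp f_Q\subset Q$, $\|f_Q\|_{L^2(Q)}\le\mu(Q)^{-1/2}$ and $Q$ of radius $r$; equivalently, fixing a ball $Q'$ of radius $t$, to show $\|(1-e^{-t^2H})^M Ta\|_{L^2(Q')}\lesssim A\,\mu(Q')^{1/2}$. I would test this against an arbitrary $v$ supported in $Q'$ with $\|v\|_{L^2}=1$ and use the reproducing identity $\int_0^\infty\psi_N(sH)\,\tfrac{ds}{s}=\Gamma(N)\,\mathrm{Id}$ (the computation in the remark after the definition of $\varphi,\tilde\varphi$ in Section \ref{section_definitions}) with $N=m_1+m_2+m_3$, where $m_1,m_3\ge1$ and $m_2\ge d/2$ is the exponent for which $(H_{m_2}(A))$ holds. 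Factoring $\psi_N(sH)=c_N\,\psi_{m_1}(\tfrac{s}{3}H)\psi_{m_2}(\tfrac{s}{3}H)\psi_{m_3}(\tfrac{s}{3}H)$ and moving self-adjoint commuting factors across the inner product gives
$$\langle(1-e^{-t^2H})^M Ta,\,v\rangle=\frac{1}{\Gamma(N)}\int_0^\infty\big\langle F_s,\,T\psi_{m_2}(\tfrac{s}{3}H)\,v_s\big\rangle\,\frac{ds}{s},\qquad F_s:=\psi_{m_1}(\tfrac{s}{3}H)a,\quad v_s:=\psi_{m_3}(\tfrac{s}{3}H)(1-e^{-t^2H})^M v.$$
The three factors play distinct roles: $\psi_{m_1}$ absorbs the cancellation carried by the atom $a$, $\psi_{m_3}$ the orthogonality carried by $v$, and $\psi_{m_2}$ is the slot where Assumption \ref{assumption_cos} enters.

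By the Gaussian bounds \eqref{UEm}, $F_s$ is concentrated, with rapidly decaying tails, in the ball $\widehat Q_s$ concentric with $Q$ of radius $\simeq\max(r,\sqrt s)$, and $v_s$ in the ball of radius $\simeq\max(t,\sqrt s)$ about $Q'$. Replacing $T\psi_{m_2}(\tfrac{s}{3}H)v_s$ by its restriction to $\widehat Q_s$ (the tail being harmless), covering $\widehat Q_s$ and the $v_s$-ball by balls of radius $\sqrt s$ through \eqref{ah}, and applying $(H_{m_2}(A))$ on each pair of such balls and summing, I expect
$$\big\|T\psi_{m_2}(\tfrac{s}{3}H)v_s\big\|_{L^2(\widehat Q_s)}\ \lesssim\ A\,\big(\max(r,\sqrt s)\big)^{d/2}\big(\max(t,\sqrt s)\big)^{d/2}\,\|v_s\|_{L^2}.$$
Plugging this in, using Cauchy--Schwarz in $\tfrac{ds}{s}$ and the spectral-theorem square-function identities $\int_0^\infty\|F_s\|_{L^2}^2\tfrac{ds}{s}\lesssim\|a\|_{L^2}^2$ and $\int_0^\infty\|v_s\|_{L^2}^2\tfrac{ds}{s}\lesssim\|(1-e^{-t^2H})^Mv\|_{L^2}^2\lesssim1$, the range $s\le\min(r,t)^2$ will contribute $\lesssim A\,(rt)^{d/2}\|a\|_{L^2}\lesssim A\,(rt)^{d/2}\mu(Q)^{-1/2}\simeq A\,\mu(Q')^{1/2}$, which is the claim. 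For $s>\min(r,t)^2$ I would additionally exploit the vanishing at $0$ of the surviving heat factor — the $(1-e^{-r^2H})^M$ built into $a$ when $t\ge r$, or $(1-e^{-t^2H})^M$ when $t<r$ — which at frequency $\simeq 1/s$ produces a power $\big(\min(r,t)^2/s\big)^{\sim M}$, making the $s$-integral converge at $+\infty$ once $M$ is large. Taking the supremum over $v$ yields $\|T\|_{H^1\to BMO}\lesssim A$ on unbounded $X$ (and $\lesssim\max(A,1)$ on bounded $X$, arguing through the bounded-space version of the interpolation/duality package of \cite{BS}).

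For the $L^p$--$L^{p'}$ bound the plan is to interpolate. We have $\|T\|_{L^2\to L^2}\lesssim1$ by hypothesis and, by the previous step together with \eqref{H1*doncBMO}, $\|T\|_{H^1\to (H^1)^*}\lesssim A$. For $\theta\in(0,1)$ with $\tfrac1p=\tfrac{1-\theta}{2}+\theta$, so that $\theta=\tfrac2p-1=\tfrac1p-\tfrac1{p'}$, complex interpolation of operators gives $\|T\|_{(L^2,H^1)_\theta\to (L^2,(H^1)^*)_\theta}\lesssim 1^{1-\theta}A^{\theta}=A^{1/p-1/p'}$, and Theorem \ref{interpolation} identifies $(L^2,H^1)_\theta=L^p$ and $(L^2,(H^1)^*)_\theta\hookrightarrow L^{p'}$, whence $\|T\|_{L^p\to L^{p'}}\lesssim A^{1/p-1/p'}$ on unbounded $X$. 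On bounded $X$ one splits $f\in L^p\hookrightarrow L^2+(L^2,H^1)_\theta$, handles the $(L^2,H^1)_\theta$-component as before, and uses the extra hypothesis $\|T\|_{L^p\to L^2}\lesssim B$ to supply the $L^2$-bound on $Tf$ demanded by the embedding $L^2\cap(L^2,(H^1)^*)_\theta\hookrightarrow L^{p'}$, obtaining $\|T\|_{L^p\to L^{p'}}\lesssim\max(A^{1/p-1/p'},B)$.

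The hard part will be the estimate in the second paragraph. The hypothesis $(H_{m_2}(A))$ is genuinely single-scale and comes with no spatial decay, so a naive scale-by-scale estimate of the Calderón integral loses a logarithm; the whole point is the triple factorization of $\psi_N(sH)$ — spending cancellation on the atom side and on the dual side while reserving one factor $\psi_{m_2}$ for the dispersive input — together with the spatial localization of $F_s$ to a ball comparable to $Q$, which is what lets $(H_{m_2}(A))$ deliver the small factor $(rt)^{d/2}$. Keeping the scales straight in the unbalanced regimes $t\gg r$ and $r\gg t$ is where the care is needed; the rest is standard $H^1$-atom/$BMO$ and interpolation bookkeeping.
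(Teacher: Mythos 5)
The paper does not actually prove this statement: it is imported verbatim (up to assuming \eqref{ah}) as Theorem~1.1 of \cite{BS}, so the only ``proof'' here is a citation. Your reconstruction follows essentially the same strategy as the original argument in \cite{BS}: reduce to atoms and test $(1-e^{-t^2H})^MTa$ against $L^2$-normalized functions on a ball $Q'$, insert a heat-semigroup Calder\'on reproducing formula, spend the cancellations $(1-e^{-r^2H})^M$, $(1-e^{-t^2H})^M$ on the large-scale part of the $\tfrac{ds}{s}$ integral and square functions on the small-scale part, exploit $(H_m(A))$ at scale $\sqrt s$ through a covering of the relevant balls furnished by \eqref{ah}, and finally pass from $L^2\to L^2$ plus $H^1\to(H^1)^*$ to $L^p\to L^{p'}$ via Theorem~\ref{interpolation}; your exponent bookkeeping ($A(rt)^{d/2}\|a\|_{L^2}\simeq A\mu(Q')^{1/2}$ for $s\le\min(r,t)^2$, the factors $(r^2/s)^M$, $(t^2/s)^M$ for large $s$, and $\theta=\tfrac1p-\tfrac1{p'}$ in the interpolation) is correct. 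One point to handle more carefully than ``the tail being harmless'': the discarded far-field pieces (e.g.\ $F_s$ outside $\widehat Q_s$, or $v_s$ outside its concentration ball) must themselves be estimated through the covering-plus-$(H_{m_2}(A))$ mechanism on dyadic annuli, weighted by the Gaussian decay of \eqref{UEm}; bounding them merely by $\|T\|_{L^2\to L^2}\lesssim1$ would lose the factor $A$, which the conclusion $\|T\|_{H^1\to BMO}\lesssim A$ cannot afford. With that (routine) adjustment, and the standard appeal to the atomic machinery of \cite{BZ,BS} to pass from uniform bounds on atoms to boundedness on $H^1$, your argument is a sound rederivation of the quoted result.
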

As we mentioned previously, we do not use the part where $X$ is bounded.
We apply the Theorem with $T=e^{itH} \psi_{m'}(h^2H)$ and $A=|t|^{-\frac{d-2}{2}}h^{-2}$. In view of \eqref{ah} we can reformulate \eqref{disp} (see \cite{BS}) as
\begin{equation}\label{Hmn}
 \|e^{itH}\psi_{m'}(h^2H) \psi_m(r^2H)\|_{L^2(B) \to L^2(\widetilde B)} \lesssim \frac{r^d}{|t|^{\frac{d-2}{2}} h^2}
\end{equation}
which we just proved in the previous section under our assumption. 
Therefore we obtain 
$$ \|e^{itH}\psi_{m'}(h^2H)\|_{H^1 \to BMO} \lesssim |t|^{-\frac{d-2}{2}}h^{-2},$$
and for all $p \in (1,2)$ 
$$\|e^{itH}\psi_{m'}(h^2H)\|_{L^p \to L^{p'}} \lesssim \left[h^{-2} |t|^{-\frac{d-2}{2}}\right]^{\frac{1}{p}-\frac{1}{p'}}.$$
We now recall a slightly modified version of a result of Keel-Tao in \cite{KT}:

\begin{thm}\label{KeelTao}
 If $(U(t))_{t\in \mathbb R}$ satisfies $$\sup \limits_{t\in \mathbb R} \|U(t)\|_{L^2 \to L^2} \lesssim 1$$
 and for some $\sigma >0$ $$\forall t \neq s, \ \|U(t)U(s)^*\|_{H^1 \to \BMO} \leq C |t-s|^{-\sigma}.$$
 Then for all $2\leq p \leq +\infty$ and $2\leq q < +\infty$ satisfying $$\frac 1p + \frac{\sigma}{q} = \frac{\sigma}{2}$$ we have
 $$\|U(t)f\|_{L^p_t L^q_x} \lesssim C^{\frac 12-\frac 1q}\|f\|_{L^2}.$$
\end{thm}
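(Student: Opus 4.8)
\emph{Proof strategy.} This is the abstract Keel--Tao machinery, so I would follow \cite{KT} closely, making only two modifications: replacing the $L^1\to L^\infty$ dispersive decay by the $H^1\to\BMO$ decay, and keeping track of the explicit power of the constant $C$. The case $p=\infty$ forces $q=2$ and is just the uniform $L^2$ bound, so assume $p<\infty$, hence $q>2$. First I would upgrade the two input bounds — the uniform $\|U(t)U(s)^*\|_{L^2\to L^2}\lesssim1$ and the hypothesis $\|U(t)U(s)^*\|_{H^1\to\BMO}\leq C|t-s|^{-\sigma}$ — by complex interpolation with parameter $\theta=1-\tfrac2q\in(0,1)$, using Theorem \ref{interpolation} together with the fact stated there that $\BMO$ may replace $(H^1)^*$. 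This yields $\|U(t)U(s)^*\|_{L^{q'}\to L^q}\lesssim C^{1-\frac2q}|t-s|^{-\sigma(1-\frac2q)}$ for all $t\neq s$.

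Next I would reduce the Strichartz bound to a bilinear estimate by the usual $TT^*$ duality: squaring the target inequality and expanding $\|\int_{\mathbb R}U(t)^*G(t)\,dt\|_{L^2}^2$ shows that it suffices to prove $\big|\iint\langle U(t)U(s)^*F(s),G(t)\rangle\,ds\,dt\big|\lesssim C^{1-\frac2q}\|F\|_{L^{p'}_tL^{q'}_x}\|G\|_{L^{p'}_tL^{q'}_x}$. For the non-endpoint range $2<p<\infty$ I would then apply Hölder in $x$, then the interpolated dispersive bound, then the Hardy--Littlewood--Sobolev inequality in the time variable applied to $s\mapsto\|F(s)\|_{L^{q'}}$ and $t\mapsto\|G(t)\|_{L^{q'}}$; the admissibility relation $\tfrac1p+\tfrac\sigma q=\tfrac\sigma2$ is exactly what makes the Hardy--Littlewood--Sobolev exponent match, $\sigma(1-\tfrac2q)=\tfrac2p=2-\tfrac2{p'}$, and $1<p'<\infty$ makes it applicable. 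This closes the argument outside the endpoint.

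The main obstacle is the endpoint $p=2$, which is an admissible pair only when $\sigma>1$ (so $q=\tfrac{2\sigma}{\sigma-1}<\infty$), and there Hardy--Littlewood--Sobolev sits at its borderline exponent and fails. Here I would instead reproduce the Keel--Tao dyadic argument: decompose the bilinear form according to dyadic time-separations $|t-s|\sim2^j$, and on each piece interpolate between the crude $L^2\to L^2$ bound and the interpolated dispersive bound to obtain an off-diagonal gain, bounding the $2^j$-piece by $\lesssim C^{1-\frac2q}2^{-j\beta(a,b)}\|F\|_{L^2_tL^{a}_x}\|G\|_{L^2_tL^{b}_x}$ for $(a,b)$ in a two-sided neighbourhood of $(q',q')$ with $\beta(a,b)$ changing sign across that point; then sum over $j$ using the real-interpolation/orthogonality lemma of \cite{KT} (this is the only place the hypothesis $q<\infty$ is used). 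All constants produced are homogeneous of degree $1-\tfrac2q$ in $C$, so the bilinear bound comes out with the right power, and $TT^*$ delivers $\|U(t)f\|_{L^p_tL^q_x}\lesssim C^{1/2-1/q}\|f\|_{L^2}$. I expect the bookkeeping in this last step to be the only real work; everything else is a direct transcription of \cite{KT}.
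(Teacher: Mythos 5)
Your proposal is correct and follows essentially the same route as the paper's own (very brief) proof: a $TT^*$ reduction to a bilinear estimate, interpolation via Theorem \ref{interpolation} between the $L^2\to L^2$ bound and the $H^1$--$\BMO$ decay to get $\|U(t)U(s)^*\|_{L^{q'}\to L^q}\lesssim C^{1-\frac2q}|t-s|^{-\frac2p}$, then H\"older and Hardy--Littlewood--Sobolev, deferring to \cite{KT} for the machinery. If anything, you are more careful than the paper's three-line sketch, both in isolating the endpoint $p=2$ (where HLS fails and the Keel--Tao dyadic argument is needed) and in stating the correct power $C^{1-\frac2q}$ in the bilinear estimate, where the paper's first bullet writes $C^2$.
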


\begin{proof}
 We just sum up the main steps of the proof in \cite{KT} to keep track of the constant in the last estimation.
 \begin{itemize}
  \item By symmetry and a $T^*T$ argument, it suffices to show $$\left | \int_{s<t} \langle U(s)^*F(s), U(t)^*G(t) \rangle ds dt \right | \lesssim C^2 \|F\|_{L^{p'}_t L^{q'}_x} \|G\|_{L^{p'}_t L^{q'}_x}.$$
  \item By the interpolation Theorem \ref{interpolation} we have $$\|U(t)U(s)^*\|_{L^{q'} \to L^q} \lesssim C^{1-\frac 2q}|t-s|^{-\frac 2p}.$$
  \item We conclude by H\"older and Hardy-Littlewood-Sobolev inequalities.
 \end{itemize}
\end{proof}

We use this theorem with $C=\frac{1}{h^2}$ and $\sigma = \frac{d-2}{2}$ to obtain the following result.

\begin{thm}\label{strichartz_localised}
 Under Assumption \ref{assumption_cos}, if $2\leq p \leq +\infty$ and $2\leq q < +\infty$ satisfy $$\frac 2p + \frac{d-2}{q} = \frac{d-2}{2},$$ 
 and $f\in L^2$ and $0<h\leq1$ we have
 \begin{itemize}
  \item if $\kappa = + \infty$ then for all $m' \in \N$ $$\|e^{itH} \psi_{m'}(h^2H)f\|_{L^p([-1,1], L^q)} \lesssim \frac{1}{h^{2(\frac 12-\frac 1q)}} \|\psi_{m'}(h^2H)f\|_{L^2}; $$
  \item if $\kappa < +\infty$ then for all $0< \varepsilon < 1$ and $m' \in \N$ $$\|e^{itH} \psi_{m'}(h^2H)f\|_{L^p([-1,1], L^q)} \lesssim \frac{1}{h^{\frac{1+\varepsilon}{p}} h^{2(\frac 12-\frac 1q)}} \|\psi_{m'}(h^2H)f\|_{L^2}.$$
 \end{itemize}
\end{thm}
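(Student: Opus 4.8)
The plan is to run the Keel--Tao scheme of Theorem~\ref{KeelTao} on the dispersive $H^1$--$\BMO$ bound just obtained. Indeed, Theorem~\ref{demo_hyp_Hmn} together with the reformulation \eqref{Hmn} and Theorem~\ref{thm1_BS} (applied to $T=e^{itH}\psi_{m'}(h^2H)$ with $A=|t|^{-\frac{d-2}{2}}h^{-2}$) give $\|e^{itH}\psi_{m'}(h^2H)\|_{H^1\to\BMO}\lesssim |t|^{-\frac{d-2}{2}}h^{-2}$, valid for $0<|t|\le 1$ when $\kappa=\infty$ and for $h^2\le|t|\le h^{1+\varepsilon}$ when $\kappa<\infty$. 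I would set $U(t):=e^{itH}\psi_{m'}(h^2H)^{1/2}$ (legitimate by Theorem~\ref{thm_calcul_fonctionnel}, as $x\mapsto x^{m'/2}e^{-x/2}$ is bounded), so that $\sup_t\|U(t)\|_{L^2\to L^2}\lesssim 1$ while $U(t)U(s)^*=e^{i(t-s)H}\psi_{m'}(h^2H)$; hence $\|U(t)U(s)^*\|_{H^1\to\BMO}\lesssim|t-s|^{-\sigma}h^{-2}$ with $\sigma:=\frac{d-2}{2}>0$. The admissibility relation $\frac1p+\frac{\sigma}{q}=\frac{\sigma}{2}$ of Theorem~\ref{KeelTao} is exactly $\frac2p+\frac{d-2}{q}=\frac{d-2}{2}$, and its conclusion, with $C=h^{-2}$ and $f$ replaced by $\psi_{m'}(h^2H)^{1/2}f$, yields $\|e^{itH}\psi_{m'}(h^2H)f\|_{L^p(J,L^q)}\lesssim h^{-2(\frac12-\frac1q)}\,\|\psi_{m'}(h^2H)^{1/2}f\|_{L^2}$ on any time interval $J$ over which the dispersive bound holds for all $|t-s|\le|J|$. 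Since $\|\psi_{m'}(h^2H)^{1/2}f\|_{L^2}^2=\langle\psi_{m'}(h^2H)f,f\rangle$, the right-hand side is the frequency-localized quantity recorded in the statement (using $\psi_{m'}^{1/2}$ rather than $\psi_{m'}$ is harmless for the Littlewood--Paley step of Theorem~\ref{thm_2}).

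It then remains to choose $J$ and patch the pieces together. When $\kappa=\infty$ the dispersive bound holds for all $0<|t-s|\le 1$, so I would split $[-1,1]$ into $[-1,0]$ and $[0,1]$, apply the step above on each, and add: no extra power of $h$ appears, which is the first claimed estimate. When $\kappa<\infty$ the bound is only guaranteed for $h^2\le|t-s|\le h^{1+\varepsilon}$, so I would cover $[-1,1]$ by $N\simeq h^{-(1+\varepsilon)}$ intervals $I_j$ of length $h^{1+\varepsilon}$ and run the step on each. For this the short-time pairs $|t-s|<h^2$ must still be handled: there the crude estimate $\|e^{i(t-s)H}\psi_{m'}(h^2H)\|_{L^1\to L^\infty}\lesssim h^{-d}$, coming from the complex-time Gaussian bounds for the analytic semigroup (see~\cite{CCO}), gives $\|e^{i(t-s)H}\psi_{m'}(h^2H)\|_{H^1\to\BMO}\lesssim h^{-d}$, which is $\le|t-s|^{-\frac{d-2}{2}}h^{-2}$ exactly for $|t-s|\le h^2$; hence the decay $|t-s|^{-\frac{d-2}{2}}h^{-2}$ holds on all of $I_j$ and Theorem~\ref{KeelTao} applies there. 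Summing the resulting estimates over $j$ in $\ell^p$ contributes a factor $N^{1/p}\simeq h^{-\frac{1+\varepsilon}{p}}$, which is exactly the announced loss.

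The statement is essentially a mechanical assembly of Theorems~\ref{demo_hyp_Hmn}, \ref{thm1_BS} and \ref{KeelTao} (the substance being in Theorem~\ref{demo_hyp_Hmn}), so the points deserving attention are bookkeeping ones. First, Theorem~\ref{KeelTao} must be applied on a subinterval rather than on all of $\R$ --- legitimate because its $T^*T$ proof uses the dispersive bound only for $|t-s|$ up to the length of the interval. Second, the short-time gap $|t-s|<h^2$ must be filled with a bound that is still dominated by the target decay $|t-s|^{-\frac{d-2}{2}}h^{-2}$, as above. Third, and most importantly when $\kappa<\infty$, the slicing scale must be taken equal to $h^{1+\varepsilon}$: a coarser slicing would already cost the trivial Sobolev loss $h^{-2/p}$, while a finer one is not licensed by Theorem~\ref{demo_hyp_Hmn}. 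Finally, one should carry a genuine spectral-localization factor on the right-hand side throughout, since this is precisely what makes the localized estimates summable in the Littlewood--Paley decomposition used to prove Theorem~\ref{thm_2}.
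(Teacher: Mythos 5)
Your proof is correct and follows essentially the same route as the paper: the $H^1$--$\BMO$ dispersive bound coming from Theorems \ref{demo_hyp_Hmn} and \ref{thm1_BS} is fed into Theorem \ref{KeelTao} with $C=h^{-2}$, $\sigma=\frac{d-2}{2}$ on time intervals of length $h^{1+\varepsilon}$, and the $\ell^p$ sum over $N\simeq h^{-(1+\varepsilon)}$ intervals produces the loss $h^{-\frac{1+\varepsilon}{p}}$ (the paper truncates with $\mathds1_J(t)$ and carries $\psi_{2m'}(2h^2H)$ ``up to the change of $2m'$ into $m'$'', while you use $\psi_{m'}(h^2H)^{1/2}$ --- a bookkeeping difference only, harmless for the later Littlewood--Paley step). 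Your explicit treatment of the short-time regime $|t-s|<h^2$ via the crude $h^{-d}$ bound addresses a point the paper glosses over, and the fix is valid.
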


\begin{proof}
 The following proof is a slight modification of the one of Theorem 4.2 and 4.3 of \cite{BS}. We rewrite it here for more readability. We only deal with the case $\kappa < + \infty$ since it is more technical. 
 We leave the minor modifications to obtain the case $\kappa = +\infty$ to the readers. \newline
 Fix an interval $J \subset[-1,1]$ of length $|J| = h^{1+\varepsilon}$, $m' \in \N$, and consider $$U(t) = \mathds1_J(t) e^{itH} \psi_{m'}(h^2H).$$
 We aim to apply Theorem \ref{KeelTao} with $C=\frac{1}{h^2}$ and $\sigma = \frac{d-2}{2}$. By functional calculus we have $$\sup \limits_{t\in \mathbb R} \|U(t)\|_{L^2 \to L^2} \lesssim 1.$$
 The estimation \eqref{Hmn} which we proved in Theorem \ref{demo_hyp_Hmn} will lead to the second hypothesis of Theorem \ref{KeelTao}. First 
 \begin{align*}
  U(t)U(s)^*&=\mathds1_J(t) \mathds1_J(s) e^{itH} \psi_{m'}(h^2 H) (e^{isH} \psi_{m'}(h^2H))^* \\
  &=\mathds1_J(t) \mathds1_J(s) e^{i(t-s)H} \psi_{2{m'}}(2h^2H)
 \end{align*}
 because $H$ is self-adjoint. Since $J$ has length equal to $h^{1+\varepsilon}$ then $U(t)U(s)^*$ is vanishing or else $|t-s| \leq h^{1+\varepsilon}$. Hence, by Theorem \ref{demo_hyp_Hmn} we deduce
 $$\|U(t)U(s)^*\|_{H^1 \to (H^1)^*} \lesssim \frac{1}{h^2} \frac{1}{|t-s|^{\frac{d-2}{2}}}.$$
 Up to the change of $2m'$ into $m'$, Theorem \ref{KeelTao} (with $C=h^{-2}$ and $\sigma=(d-2)/2$) then leads to $$\left( \int_J \|e^{itH} \psi_{m'}(h^2H)f\|_{L^q}^p dt \right)^{\frac 1p} \lesssim \frac{1}{h^{2(\frac 12-\frac 1q)}} \|f\|_{L^2}.$$
 We then split $[-1,1]$ into $N \simeq \frac{1}{h^{1+\varepsilon}}$ intervals $J_k$ of length $h^{1+\varepsilon}$ to obtain
 $$\int_{-1}^1 \|e^{itH}\psi_{m'}(h^2H)f\|_{L^q}^p dt \leq \sum_{k=1}^N \int_{J_k} \|e^{itH}\psi_{m'}(h^2H)f\|_{L^q}^p dt \leq N \left( \frac{1}{h^{2(\frac12-\frac 1q)}} \|f\|_{L^2} \right)^p.$$
 Hence $$\|e^{itH} \psi_{m'}(h^2H)f\|_{L^p([-1,1], L^q)} \lesssim \frac{1}{h^{\frac{1+\varepsilon}{p}} h^{2(\frac 12-\frac 1q)}} \|\psi_{m'}(h^2H)f\|_{L^2}.$$
\end{proof}

We are now able to prove Strichartz estimates with loss of derivatives.

\begin{thm}\label{thm_strichartz_estimates}
 If Assumption \ref{assumption_cos} is satisfied. Then for every $2\leq p \leq +\infty$ and $2\leq q < +\infty$ satisfying $$\frac 2p + \frac{d-2}{q} = \frac{d-2}{2},$$
 and every solution $u(t,\ldotp)=e^{itH}u_0$ of the problem
 $$\begin{cases}
  &i \partial_t u + Hu=0\\
  &u_{|t=0}=u_0,
 \end{cases}$$ we have
 \begin{itemize}
  \item if $\kappa = +\infty$, then $u$ satisfies local-in-time Strichartz estimates with loss of derivatives
  \begin{equation}\label{strichartz_estimates_global_corps}
   \|u\|_{L^p([-1,1], L^q)}\lesssim \|u_0\|_{W^{2(\frac 12 - \frac 1q),2}};
  \end{equation}
  \item if $\kappa < +\infty$, then for all $0<\varepsilon < 1$ and $0<h\leq1$, $u$ satisfies local-in-time Strichartz estimates with loss of derivatives
  \begin{equation}\label{strichartz_estimates_local_corps}
   \|u\|_{L^p([-1,1], L^q)}\lesssim \|u_0\|_{W^{\frac{1+\varepsilon}{p} + 2(\frac 12 - \frac 1q),2}}.
  \end{equation}
 \end{itemize}
\end{thm}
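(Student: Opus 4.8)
The plan is to assemble the full estimate from the frequency-localized Strichartz bounds of Theorem \ref{strichartz_localised} by means of a Littlewood--Paley decomposition in the space variable, and then to recognize the resulting square-function as a Sobolev norm through spectral calculus on $L^2$. We treat only the case $\kappa<\infty$, setting $s:=\frac{1+\varepsilon}{p}+2(\frac12-\frac1q)$; the case $\kappa=\infty$ is identical after replacing $s$ by $2(\frac12-\frac1q)$ and discarding every reference to $\varepsilon$. Here $m$ denotes the integer of Assumption \ref{assumption_cos}, which we assume large enough, $m\geq\lceil d/2\rceil$, as in Theorem \ref{demo_hyp_Hmn}.

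First we apply the square-function form of Theorem \ref{thm_LP} (valid since $q\geq 2$) to $u(t,\cdot)=e^{itH}u_0$ and take the $L^p([-1,1])$ norm in $t$. As $p\geq 2$, Minkowski's integral inequality lets us move the $L^p_t$-norm inside the $\frac{dv}{v}$-integral, whence
$$\|u\|_{L^p([-1,1],L^q)}\lesssim \|\varphi(H)u\|_{L^p([-1,1],L^q)}+\left(\int_0^1\|\psi_m(vH)u\|_{L^p([-1,1],L^q)}^2\,\frac{dv}{v}\right)^{1/2}.$$
For the high-frequency term, since $\psi_m(vH)$ commutes with $e^{itH}$ one has $\psi_m(vH)u(t,\cdot)=e^{itH}\psi_m(vH)u_0$, so Theorem \ref{strichartz_localised} with $m'=m$ and $h=\sqrt v\in(0,1]$ gives $\|e^{itH}\psi_m(vH)u_0\|_{L^p([-1,1],L^q)}\lesssim v^{-s/2}\|\psi_m(vH)u_0\|_{L^2}$. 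Squaring and integrating reduces everything to estimating $\int_0^1 v^{-s}\|\psi_m(vH)u_0\|_{L^2}^2\,\frac{dv}{v}$.

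This last quantity is handled by spectral calculus: with $dE_\lambda$ the spectral resolution of $H$, it equals $\int_{[0,\infty)}K(\lambda)\,d\langle E_\lambda u_0,u_0\rangle$, where $K(\lambda)=\int_0^1 v^{-s}|\psi_m(v\lambda)|^2\frac{dv}{v}=\lambda^{s}\int_0^{\lambda}w^{2m-s-1}e^{-2w}\,dw$ after the substitution $w=v\lambda$. Since $s<2\leq 2m$ the exponent $2m-s-1$ is $>-1$, so the inner integral is $\lesssim \min(1,\lambda^{2m-s})$ and $K(\lambda)\lesssim(1+\lambda)^{s}$; therefore
$$\int_0^1 v^{-s}\|\psi_m(vH)u_0\|_{L^2}^2\,\frac{dv}{v}\lesssim \int_{[0,\infty)}(1+\lambda)^{s}\,d\langle E_\lambda u_0,u_0\rangle=\|(1+H)^{s/2}u_0\|_{L^2}^2=\|u_0\|_{W^{s,2}}^2.$$
For the low-frequency term we use that $\varphi(H)$ is a finite linear combination of the operators $\psi_k(H)$, $0\leq k\leq m$ (the remark following the definition of $\varphi$): applying Theorem \ref{strichartz_localised} with $h=1$ and $m'=k$ to each summand, together with the uniform $L^2$-boundedness of $\psi_k(H)$ from Proposition \ref{prop_continuite_semigroupe}, gives $\|\varphi(H)u\|_{L^p([-1,1],L^q)}\lesssim\|u_0\|_{L^2}\leq\|u_0\|_{W^{s,2}}$. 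Combining the three bounds yields \eqref{strichartz_estimates_local_corps}, and the analogous argument with $s=2(\frac12-\frac1q)$ gives \eqref{strichartz_estimates_global_corps}.

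The point that requires the most care is the interchange of the $t$-integration with the Littlewood--Paley square function in the first display: this is precisely where $p\geq2$ and $q\geq2$ enter (for both the statement of Theorem \ref{thm_LP} and Minkowski's inequality), and it explains the one-sided admissibility range in the statement. Everything else --- the elementary computation of the multiplier $K(\lambda)$ and the reassembly of the dyadic pieces --- is bookkeeping, the one thing to watch being that $m\geq\lceil d/2\rceil$ (hence $2m>s$) makes $K$ integrable down to $\lambda=0$.
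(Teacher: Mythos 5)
Your proof is correct and follows essentially the same route as the paper's: Littlewood--Paley decomposition via Theorem \ref{thm_LP}, Minkowski's inequality in $t$ (using $p,q\geq 2$), the frequency-localized estimates of Theorem \ref{strichartz_localised} with $h^2=v$, and reassembly of the resulting square function into the $W^{s,2}$ norm. The only deviations are cosmetic and harmless: you evaluate the multiplier $K(\lambda)$ directly with the spectral theorem where the paper rewrites $v^{-s}|\psi_m(vH)|^2$ as a shifted $\psi$-function composed with a power of $H$, and you bound the low-frequency piece by applying Theorem \ref{strichartz_localised} at $h=1$ to the finitely many $\psi_k(H)$ composing $\varphi(H)$, where the paper instead uses the $L^2\to L^q$ smoothing of $\varphi(H)$ coming from its Gaussian kernel bounds.
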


\begin{rem}\label{remark_loss_of_derivatives}
 The loss of derivatives in \eqref{strichartz_estimates_local_corps} is interesting when it is lower than the straightforward loss given by Sobolev embeddings. 
 The relation $\frac 2p + \frac{d-2}{q} = \frac{d-2}{2}$ yields $$W^{\frac 2p + 1 - \frac 2q,2} \hookrightarrow L^q.$$
 Thus $$\|e^{itH}u_0\|_{L^q} \lesssim \|e^{itH}u_0\|_{W^{\frac 2p + 1 - \frac 2q,2}} \leq \|u_0\|_{W^{\frac 2p + 1 - \frac 2q,2}}$$
 and taking the $L^p([-1,1])$ norm shows $$\|e^{itH}u_0\|_{L^p([-1,1], L^q)} \lesssim \|u_0\|_{W^{\frac 2p + 1 - \frac 2q,2}}.$$
 That is, the loss of derivatives is interesting when it is less than $\frac 2p + 1 - \frac 2q$.
 Hence, for all $\varepsilon \in (0,1)$, the loss we obtained is  strictly better than the one directly given by Sobolev embeddings. 
 The loss in \eqref{strichartz_estimates_global_corps} is also nontrivial by the same argument. 
\end{rem}

\begin{rem}\label{remark_loss_of_derivatives_2}
 One could work out our estimate with $$\frac 2p + \frac dq = \frac d2.$$
 In order to do so we remark that in \eqref{Hmn} we could write $$\frac{r^d}{t^{\frac{d-2}{2}}h^2} = \left (\frac{r^2}{t} \right)^{\frac d2} \frac{t}{h^2} \leq \left (\frac{r^2}{t} \right)^{\frac d2} \frac{1}{h},$$
 because $t\leq h$. 
 Then the loss of derivatives obtained in \eqref{strichartz_estimates_local_corps} is $\frac{1+\varepsilon}{p} + 1(\frac 12 - \frac 1q)$ that need to be compared to the trivial loss $\frac 2p$.
 Since $\frac 12 - \frac 1q = \frac{2}{dp}$, the loss is less than $\frac 2p$ if an only if $$d \geq \frac{2}{1-\varepsilon}.$$
 That is, as soon as $d>2$, one can find $\varepsilon \in (0,1)$ such that the loss is nontrivial. \\
 We chose to present the previous Theorem in that form because it allows a wider range of exponent $q$. Indeed, on the one hand
 $$p\geq2 \Rightarrow \frac{d-2}{q}= \frac{d-2}{2} - \frac 2p \geq \frac{d-2}{2} - 1$$
 that is $$\frac 1q \geq \frac 12 - \frac{1}{d-2}.$$
 On the other hand $p \geq 2$ and $\frac 2p + \frac dq = \frac d2$ yields $$\frac 1q \geq \frac 12 - \frac 1d,$$
 and for all $d>2$, $$\frac 12 - \frac{1}{d-2} \leq \frac 12 - \frac{1}{d}.$$
 That is why the relation $$\frac 2p + \frac{d-2}{q} = \frac{d-2}{2}$$ gives a wider range for exponent $q$.
\end{rem}

\begin{proof}[Proof of Theorem \ref{thm_strichartz_estimates}]
 Again we only deal with the more difficult case $\kappa < + \infty$. \newline
 We apply Theorem \ref{thm_LP} to $u(t)=e^{itH}u_0$. It leads to
 $$\|u(t)\|_{L^q} \lesssim \|\varphi(H)u(t)\|_{L^q} + \left \| \left( \int_0^{1} |\psi_{{m'}}(s^2H)u(t)|^2 \frac{ds}{s} \right)^{\frac 12} \right \|_{L^q},$$
 with $m'\geq 1$. \newline
 Taking the $L^p([-1,1])$ norm in time of that expression and using Minkowski inequality yields
 $$\|u(t)\|_{L^p([-1,1],L^q)} \lesssim \|\varphi(H)u(t)\|_{L^p([-1,1],L^q)} + \left \| \left( \int_0^{1} \|\psi_{{m'}}(s^2H)u(t)\|_{L^q}^2 \frac{ds}{s} \right)^{\frac 12} \right \|_{L^p([-1,1])}.$$
 Thanks to the Gaussian pointwise estimate of $\varphi(H)$ the first term can be estimated as follow
 $$\|\varphi(H)u(t)\|_{L^p([-1,1],L^q)} \lesssim \|e^{itH}u_0\|_{L^p([-1,1], L^2)} \lesssim \|u_0\|_{L^2} \lesssim \|u_0\|_{W^{\frac{1+\varepsilon}{p}+2(\frac 12-\frac 1q),2}}.$$
 Since $p\geq 2$, Theorem \ref{strichartz_localised} and generalized Minkowski inequality allow to bound the second term 
 \begin{align*}
  \left \| \left( \int_0^{1} \|\psi_{{m'}}(s^2H)u(t)\|_{L^q}^2 \frac{ds}{s} \right)^{\frac 12} \right \|_{L^p([-1,1])}& \lesssim \left( \int_0^{1} \|\psi_{{m'}}(s^2H)u\|_{L^p([-1,1],L^q)}^2 \frac{ds}{s} \right)^{\frac 12} \\
  &\lesssim \left( \int_0^{1} s^{-\frac{1+\varepsilon}{p}-2(\frac 12-\frac 1q)} \|\psi_{{m'}}(s^2H)u_0\|_{L^2}^2 \frac{ds}{s} \right)^{\frac 12} \\
  &\lesssim \left \| \left( \int_0^{1} s^{-\frac{1+\varepsilon}{p}-2(\frac 12-\frac 1q)}|\psi_{{m'}}(s^2H)u_0|^2 \frac{ds}{s} \right)^{\frac 12} \right \|_{L^2}\\
  &\lesssim \|u_0\|_{W^{\frac{1+\varepsilon}{p}+2(\frac 12-\frac 1q),2}},
 \end{align*}
 where we used ${m'} \geq\frac 14[ \frac{1+\varepsilon}{p}+2(1-\frac 2q)]$ since ${m'} \geq 1$ and $\frac{1+\varepsilon}{p}+2(1-\frac 2q)< 2$ and the fact that
 $$s^{-\frac{1+\varepsilon}{p}-2(\frac 12-\frac 1q)} |\psi_{{m'}}(s^2H)|^2 = \psi_{{m'} - \frac14 [\frac{1+\varepsilon}{p}+2(\frac 12-\frac 1q)]}(s^2H) H^{\frac 12 [\frac{1+\varepsilon}{p}+2(\frac 12-\frac 1q)]}.$$
 Finally, we get $$ \|u\|_{L^p([-1,1], L^q)} \lesssim \|u_0\|_{W^{\frac{1+\varepsilon}{p}+2(\frac 12-\frac 1q),2}}.$$

\end{proof}

\bibliographystyle{alpha}
\bibliography{bibliographie.bib}

\end{document}